\documentclass[preprint,times]{elsarticle}

\usepackage[utf8]{inputenc}

\usepackage[english]{babel}
\usepackage{graphicx}
\usepackage{hyperref}
\usepackage{amsmath,amsthm,amssymb}
\usepackage{enumerate}
\usepackage{cleveref} 
\usepackage{tikz}






\def\A{\mathcal{A}}

\def\N{\mathbb{N}}

\newtheorem{theorem}[]{Theorem}
\newtheorem{corollary}[theorem]{Corollary}
\newtheorem{lemma}[theorem]{Lemma}

\newtheorem{definition}[theorem]{Definition}

\theoremstyle{remark}

\newtheorem{example}[theorem]{Example}


\crefname{theorem}{Theorem}{Theorems}
\crefname{corollary}{Corollary}{Corollaries}
\crefname{example}{Example}{Examples}
\crefname{lemma}{Lemma}{Lemmas}
\crefname{proposition}{Proposition}{Propositions}
\crefname{definition}{Definition}{Definitions}
\crefname{example}{example}{examples}



\begin{document}

\begin{frontmatter}


\title{Synchronizing delay for binary uniform morphisms}
\author[kk]{Karel Klouda}
\ead[kk]{karel.klouda@fit.cvut.cz}
\author[km]{Kate\v{r}ina Medkov\'{a}}
\address[kk]{
Faculty of Information Technology, Czech Technical University in
Prague, Th\'{a}kurova 9, 160 00, Prague 6}
\address[km]{
Faculty of Nuclear Sciences and Physical Engineering, Czech Technical University in
Prague,B\v{r}ehov\'{a} 7, 115 19, Prague 1}



\begin{abstract}
Circular D0L-systems are those with finite synchronizing delay. We introduce a tool called graph of overhangs which can be used to find the minimal value of synchronizing delay of a given D0L-system. By studying the graphs of overhangs, a general upper bound on the minimal value of a synchronizing delay of a circular D0L-system with a binary uniform morphism is given.
\end{abstract}

\begin{keyword}
D0L-system \sep circularity \sep synchronizing delay

\MSC 68R15

\end{keyword}
\end{frontmatter}

\section{Introduction}

Circular codes are a classical notion studied in theory of codes~\cite{BePeRe}. A set $X$ of finite words is a code if each word in $X^+$ has a unique decomposition into words from $X$. If we slightly modify the requirement of uniqueness, we get the definition of a circular code: $X$ is a circular code if each word in $X^+$ written in a circle has a unique decomposition into words from $X$.

An analogue to codes in the family of D0L-systems are D0L-systems that are injective on the set of all factors of their languages. Circularity is defined as slightly relaxed injectivity: a D0L-system is circular if long enough factors of its language have a unique preimage (under the respective morphism) in the language except for some prefix and suffix bounded in length by some constant. This constant is called a synchronizing delay and it is studied in this paper.

In the case of D0L-systems circularity is connected with repetitiveness. As stated in~\cite{MiSe}, a non-circular D0L-system is repetitive, i.e., for each $k \in \N$ there exists a word $v$ such that $v^k$ is a factor of the language. In fact, if a D0L-system is not pushy (which is always true if the morphism is uniform and the language is infinite), then circularity is equivalent to non-repetitiveness~\cite{KlSt13a}.

As explained by Cassaigne in~\cite{Ca94}, knowledge of the value of the synchronizing delay can be very helpful when analysing the structure of bispecial factors in languages of D0L-systems. This idea was further developed by one of the authors in~\cite{Kl12}, where an algorithm for generating all bispecial factors is given. This algorithm works for circular and non-pushy D0L-systems and its computational complexity depends on the value of the synchronizing delay. This fact and the absence of any known bound on the value of synchronizing delay is the main motivation of the present work.

Unfortunately, it seems it is not easy to find such a bound. Therefore we focus on the simplest case: a circular D0L-system with binary $k$-uniform morphism with $k \geq 2$. Using the notion of the graph of overhangs introduced in Subsection~\ref{sec:graph_of_overhangs}, we prove the following result. The details of the proof are given in Section~\ref{sec:the-proof}.
\begin{theorem}\label{thm:main_result}
	If the morphism $\varphi$ of a circular D0L-system $(\{a,b\}, \varphi, a)$ is $k$-uniform, then the minimum value of its synchronizing delay, denoted by $Z_\text{min}$, is bounded as follows:
	\begin{enumerate}[(i)]
		\item $Z_{\text{min}} \leq 8$ if $k = 2$,
		\item $Z_{\text{min}} \leq k^2 + 3k - 4$ if $k$ is an odd prime number,
		\item $Z_{\text{min}} \leq k^2\left(\frac{k}{d} - 1\right) + 5k - 4$ otherwise,
	\end{enumerate}
	where number $d$ is the least divisor of $k$ greater than $1$.
\end{theorem}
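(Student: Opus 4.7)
The plan is to translate the statement about $Z_{\min}$ into a graph-theoretic question via the graph of overhangs defined in Subsection~\ref{sec:graph_of_overhangs}. An overhang encodes a potential ambiguity in parsing a factor of the language as a concatenation of images $\varphi(a)$ and $\varphi(b)$: two legal parsings that differ by an offset in the preimage. Circularity of the D0L-system is equivalent to the absence of bi-infinite walks in this graph (equivalently, to the absence of reachable cycles of a certain type), and the largest synchronizing delay is bounded above by a quantity of the form $k \cdot L + c$, where $L$ is the length of the longest walk in the relevant subgraph and $c$ accounts for bounded initial and terminal overhangs.

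First I would prove this translation: a factor of length $\ell$ admitting two distinct parsings (up to the allowed boundary slack) gives rise to a walk of length roughly $\ell/k$ in the graph, and conversely every sufficiently long walk yields a non-uniquely-parsable factor. The bulk of the work is then to bound $L$ under the circularity hypothesis. For a binary $k$-uniform morphism, the vertex set has size $O(k)$ since the relevant overhangs are proper prefixes or suffixes of $\varphi(a)$ and $\varphi(b)$. I would enumerate these vertices by the offset $i \in \{1, \ldots, k-1\}$ together with the top and bottom letters whose images cross the mismatch boundary, and describe the transitions via local matching conditions between $\varphi(a)$ and $\varphi(b)$.

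Each of the three bounds is then obtained by a tailored analysis of which walks survive circularity. When $k = 2$ only finitely many morphisms exist and a direct inspection of the resulting graphs yields $Z_{\min} \leq 8$. When $k$ is an odd prime, a Fine--Wilf-type argument applies: any coincidence between shifted copies of $\varphi(a)$ and $\varphi(b)$ would force a period dividing $k$, and primality forces this period to be $1$, i.e.\ the two images would be powers of a single letter, which is ruled out by circularity; this collapses all long walks and delivers the bound $k^2 + 3k - 4$. In the remaining composite case, nontrivial periods as small as $k/d$ may appear, so walks of length up to roughly $k(k/d - 1)$ can occur; adding the bounded contributions from the initial and terminal overhangs produces the bound $k^2(k/d-1) + 5k - 4$.

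The main obstacle is the composite case (iii): one must combine classical combinatorics-on-words tools (Fine and Wilf, primitivity of conjugates) with a careful accounting of how overhangs chain together under the edge relation, in order both to rule out cycles forbidden by circularity and to isolate the extremal walks that realize the claimed bound. A secondary but nontrivial bookkeeping issue is pinning down the additive constant $5k - 4$ from the initial and terminal overhangs, which requires tracking boundary effects precisely rather than absorbing them into the leading $k^2(k/d-1)$ term. Cases (i) and (ii) serve as sanity checks in which the constants come out cleanly and confirm that the general framework is working correctly.
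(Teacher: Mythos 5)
Your high-level framework --- translating $Z_{\min}$ into the word-length of the longest walk in the graph of overhangs, with an additive correction for boundary effects --- is the same as the paper's, but the proposal is missing the one mechanism that makes the bound finite, and your argument for case (ii) is wrong. The quantity to bound is not ``the length of the longest walk in the relevant subgraph'': the graph of overhangs of a circular system can perfectly well contain cycles (the Thue--Morse system is the standard example, with $Z_{\min}=3$ yet a graph containing two cycles), so the longest walk in the graph is infinite. What is finite is the longest \emph{$G$-admissible} walk, i.e., one whose edge labels concatenate to genuine factors of $S(L(G))$. The paper first shows that every weakly connected component has at most four vertices and uses Lyndon--Sch\"{u}tzenberger-- and Krieger-type word equations to exclude components containing two loops on one vertex, a loop attached to a two-cycle, or two loops joined by an edge; after that reduction every walk traverses a single loop $R_1$ times or a single two-cycle $R_2$ times, and $R_1,R_2$ are controlled by the maximal powers of $a$, $b$, $ab$, $ba$ occurring in the language. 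Your proposal never identifies these repetition counts, and without them there is no route from ``circularity'' to a finite bound on walk length.

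The concrete error is in case (ii). A loop labelled $(\varphi(a),\varphi(a),\ell)$ forces $\varphi(a)=xy=yx$, hence $\varphi(a)=z^m$ with $m\geq 2$; for prime $k$ this indeed forces $|z|=1$, so $\varphi(a)=a^k$ or $\varphi(a)=b^k$. You assert that ``the two images would be powers of a single letter, which is ruled out by circularity'' and that this ``collapses all long walks.'' But only $\varphi(a)=a^k$ is excluded; $\varphi(a)=b^k$ with $\varphi(b)$ containing both letters is circular, and $\varphi(a)=b^k$, $\varphi(b)=ba^{k-1}$ is exactly the extremal morphism attaining $R_a=k-1$. The corresponding loop is then traversed up to $k-1$ times, each traversal contributing $k$ to the word-length, which is precisely where the $k^2$ term of $k^2+3k-4$ comes from. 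If your mechanism really collapsed all long walks, the resulting bound would be $O(k)$, contradicting the very statement you are proving; as written, your argument would also wrongly exclude the extremal example. The composite case (iii) and the $k=2$ case are described at a level of generality consistent with the paper, but they inherit the same gap: without the reduction to $R_1$ and $R_2$ and their bounds via letter powers, the constants $4k-4$ and $5k-4$ cannot be pinned down.
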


\section{Preliminaries}

A finite set of symbols is an \emph{alphabet}, denoted by $\A$. The set of all finite words over $\A$ is denoted by $\A^*$, the \emph{empty word} is $\varepsilon$ and $\A^+ = \A^* \setminus \{\varepsilon\}$.
If a word $u \in \A^*$ is a concatenation of three words $x, y$ and $z$ from $\A^*$, i.e., $u = xyz$, the word $x$ is a \emph{prefix} of $u$, $y$ its \emph{factor} and $z$ a \emph{suffix}. We put $x^{-1}u = yz$ and $uz^{-1} = xy$. The \emph{length} of the word $u$ equals the number of letters in $u$ and is denoted by $|u|$; $|u|_a$ denotes the number of occurrences of a letter $a$ in $u$.

A mapping $\varphi: \A^* \to \A^*$ is a \emph{morphism} if for every $v,u \in \A^*$ we have $\varphi(vu) = \varphi(v)\varphi(u)$. A triplet $G = (\A, \varphi, w)$ is a \emph{D0L-system}, if $\varphi$ is a morphism on $\A$ and $w \in \A^+$. The word $w$ is called an \emph{axiom}. 
The \emph{language of $G$} is the set $L(G) = \{ \varphi^n(w) \colon n \in \N \}$. The set of all factors of elements of $L(G)$ is denoted by $S(L(G))$.
The alphabet is always considered to be the minimal alphabet necessary, i.e., $\A \cap S(L(G)) = \A$.

A D0L-system $G = (\A, \varphi, w)$ is \emph{injective on $S(L(G))$} if for every $u,v \in S(L(G))$, $\varphi(u) = \varphi(v)$ implies that $u = v$.
It is clear that if $\varphi$ is injective, then $G$ is injective. If $\varphi$ is \emph{non-erasing}, i.e., $\varphi(a) \neq \varepsilon$ for all $a \in \A$, then $G$ is a \emph{propagating D0L-system}, shortly \emph{PD0L-system}.

Given a D0L-system $G = (\A, \varphi, w)$, we say that a letter $a$ is \emph{bounded} if the set $\{ \varphi^n(a) \colon n \in \N \}$ is finite.
If a letter is not bounded, it is \emph{unbounded}. The system $G$ is \emph{pushy} if $S(L(G))$ contains infinitely many factors containing bounded letters only.

A D0L-system $G$ is \emph{repetitive} if for any $k \in \N$ there is a non-empty word $v$ such that $v^k$ is a factor from $S(L(G))$.
By~\cite{EhRo83}, any repetitive D0L-system is \emph{strongly repetitive}, i.e., there is a non-empty word $v$ such that $v^k$ is a factor for all $k \in \N$. We say that $G$ is \emph{unboundedly repetitive} if there is $v$ containing at least one unbounded letter such that $v^k$ is a factor for all $k \in \N$.

\subsection{Circular D0L-systems}


In~\cite{Ca94}, a circular D0L-system is defined using the notion of synchronizing point (see Section 3.2 in~\cite{Ca94} for details). We give here an equivalent definition employing the notion of interpretation.
\begin{definition}
Let $G = (\A,\varphi, w)$ be a PD0L-system and $u \in S(L(G))$.
A triplet $(p,v,s)$, where $p,s \in \A^*$ and $v \in S(L(G))$, is an \emph{interpretation of the word $u$} if $\varphi(v) = pus$.
\end{definition}

\begin{definition}
Let $G = (\A,\varphi, w)$ be a PD0L-system. We say that two interpretations $(p,v,s)$ and $(p',v',s')$ of a word $u \in S(L(G))$ are \emph{synchronized at position $k$} if there exist indices $i$ and $j$ such that
$$
	\varphi(v_1\cdots v_i) = p u_1 \cdots u_k \quad \text{ and } \quad \varphi(v'_1\cdots v'_j) = p' u_1 \cdots u_k
$$
with $v = v_1\cdots v_n \in \A^n$, $v' = v'_1 \cdots v'_m \in \A^m$ and $u = u_1 \cdots u_\ell \in \A^\ell$ (if $k = 0$, we put $u_1 \cdots u_k = \varepsilon$) Two interpretations that are not synchronized at any position are called \emph{non-synchronized}.

We say that a word $u \in S(L(G))$ has a \emph{synchronizing point} at position $k$ with $0 \leq k \leq |u|$ if all its interpretations are pairwise synchronized at position $k$.
\end{definition}

\begin{definition}
Let $G = (\A,\varphi, w)$ be a PD0L-system injective on $S(L(G))$. We say that $G$ is circular if there is a positive integer $Z$, called a \emph{synchronizing delay}, such that any $u$ from $S(L(G))$ longer than $Z$ has a synchronizing point. The minimal constant $Z$ with this property is denoted by $Z_{\text{min}}$.
\end{definition}

By the results from~\cite{MiSe, KlSt13a}, non-circular systems are repetitive (and by \cite{EhRo83} also strongly repetitive). In fact, a D0L-system injective on $S(L(G))$ is not circular if and only if it is \emph{unboundedly repetitive}~\cite{KlSt13a}, i.e., there exists $v$ containing an unbounded letter such that $v^k \in S(L(G))$ for all $k \in \N$. Since this property can be checked by a simple algorithm~\cite{KlSt13}, we can easily verify whether a given D0L-system  injective on $S(L(G))$ is circular or not.

The notion of circularity is inspired by the notion of circular code:
\begin{definition}
A subset $X$ of $\A^*$ is called a \emph{code} over alphabet $\A$ if for any word $v \in X^+$ there are uniquely given a number $n$ and words $x_1, x_2, \ldots, x_n$ from $X$ so that $v = x_1x_2\ldots x_n$.

The set $X$ is a \emph{circular code} over $\A$ if for all $n, m \geq 1$, $x_1, \ldots, x_n, y_1, \ldots , y_m \in X$, $p \in \A^*$ and $s \in \A^+$ it holds that:
$$
(sx_2x_3 \cdots x_np = y_1y_2 \cdots y_m \ \text{and} \ x_1 = ps)\Longrightarrow (n=m \, , \  p = \epsilon \ \text{and} \ x_i = y_i \ \forall \ i= 1, \ldots , n)\, .
$$
\end{definition}

\subsection{Graph of overhangs} \label{sec:graph_of_overhangs}

Here we introduce the basic tool we use to prove the main result of this paper: graphs of overhangs. To understand the motivation of the definition let us consider a D0L-system $G = (\A, \varphi, w)$ and a word $u$ from $S(L(G))$ that has two non-synchronized interpretations $(p_1,x,s_1)$ and $(p_2,y,s_2)$ with $x = x_1x_2x_3x_4, x_i \in \A$ and $y = y_1y_2y_3y_4$.  Let $\varphi(x_1) = u_{i_1}$, $\varphi(x_2) = u_{i_3}$, $\varphi(x_3) = u_{i_6}$, $\varphi(x_4) = u_{i_7}$ and $\varphi(y_1) = u_{i_2}$, $\varphi(y_2) = u_{i_4}$, $\varphi(y_3) = u_{i_5}$, $\varphi(y_4) = u_{i_8}$ (see the top line in Figure~\ref{fig:idea}). This structure can be decomposed into \emph{overhangs} (see Definition~\ref{dfn:overgang} and the second line in Figure~\ref{fig:idea}). Each overhang has left and right overlapping words, in the figure denoted by $s_1, s_2, s_3$ and $s_4$. The structure of these overhangs can be captured as a graph (see Definition~\ref{def:graph_of_overhangs} and the bottom line in Figure~\ref{fig:idea}): for instance the first overhang is connected with an directed edge with the second overhang since their right and left overlapping words are equal to $s_2$.
\begin{figure}[ht]
    \begin{center}\includegraphics{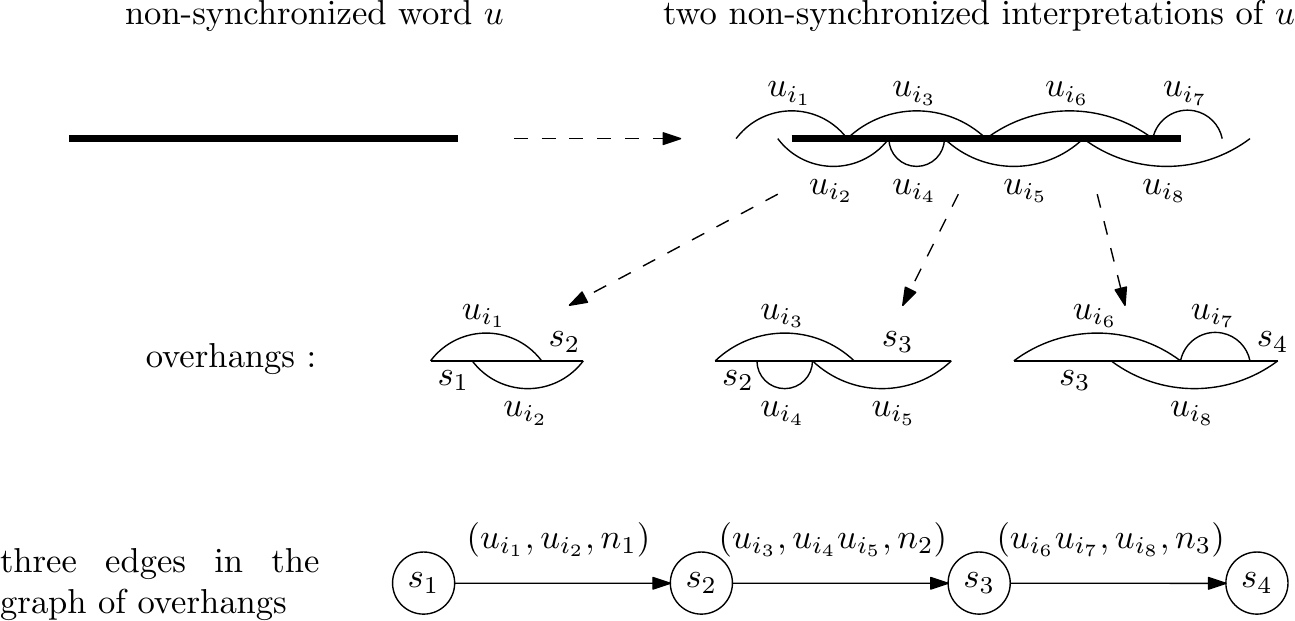}\end{center}
    \caption{The construction of a graph of overhangs, the numbers $n_1, n_2$ and $n_3$ are the lengths of overlaps of $u_{i_1}$ and $u_{i_2}$, $u_{i_3}$ and $u_{i_4}u_{i_5}$, and $u_{i_6}u_{i_7}$ and $u_{i_8}$, respectively.}\label{fig:idea}
\end{figure}
\begin{definition} \label{dfn:overgang}
	Given a PD0L-system $(\A, \varphi, w)$, let $X = \{\varphi(a) \mid a \in \A\}$. An ordered triplet
	$(u_1u_2\cdots u_m,v_1v_2 \cdots v_n, |x|)$, where $x$ is a non-empty word over $\A$ and $u_i$ and $v_j$ are elements of $X$ for all $1 \leq i \leq m$ and $1 \leq j \leq n$, is called an \emph{overhang} if the following conditions are satisfied:
	\begin{itemize}
	  \item[(i)] $x$ is a suffix of $u_1u_2\cdots u_m$ but not of $u_2u_3\cdots u_m$, 
	  \item[(ii)] $x$ is a prefix of $v_1v_2\cdots v_n$ but not of $v_1v_2\cdots v_{n-1}$,
	  \item[(iii)] either $x \neq  u_1u_2\cdots u_m$ or $x \neq v_1v_2\cdots v_n$,
	  \item[(iv)] $|v_1\cdots v_{n-1}| < |x(u_2\cdots u_m)^{-1}|$\,.
	\end{itemize}
	Here again we put $v_1\cdots v_{n-1} = \varepsilon$ if $n = 1$ and $u_2\cdots u_m = \varepsilon$ if $m = 1$.
	
	The word $x$ is called the \emph{common factor} of the overhang and the words
	$u_1u_2\cdots u_mx^{-1}$ and $x^{-1}v_1v_2\cdots v_{n}$ are \emph{left} and
	\emph{right overhang}, respectively.
\end{definition}
\begin{example}\label{PKonstrukceGrafu}
Consider the PD0L-system $G = (\{0, 1, 2\}, \varphi, 0)$ with $\varphi$ given by $0 \to 011, 1 \to 1120, 2 \to 120$.
The set $X$ in the definition above reads $\{011, 1120, 120\}$. To find all possible overhangs, we have to go through all elements $(u, v)$ of $X \times X$ and look for a non-empty word $x$ which is both a suffix of $u$ and prefix of $v$ and for $z \in X^*$ such that $(uz,v, |x|)$ or $(u,zv, |x|)$ is an overhang. After doing so we find all the overhangs for D0L-system $G$: $(011,1120, 2)$,  $(011, 1120, 1)$,  $(011 \, 120, 1120, 4)$, $(011, 120, 1)$, $(1120, 011, 1)$, $(120, 011, 1)$.
\end{example}
\begin{definition} \label{def:graph_of_overhangs}
	Given a PD0L-system $G = (\A, \varphi, w)$. A \emph{graph of overhangs} $GO_G$ for the PD0L-system $G$
	is given by this rule: words $s_1$ and $s_2$ are vertices of $GO_G$ connected with a directed edge $(s_1, s_2)$ labelled by $(u_1u_2\cdots u_m,v_1v_2 \cdots v_n, |x|)$ if the triplet $(u_1u_2\cdots u_m,v_1v_2 \cdots v_n, |x|)$ is an overhang such that $s_1$ is its
	left and $s_2$ its right overhang.
\end{definition}
\begin{example}[continued]
	The graph of overhangs for the PD0L-system from Example~\ref{PKonstrukceGrafu} is depicted in Figure~\ref{fig:graph_example}.
	\begin{figure}[!ht]
	\begin{center}
		\includegraphics{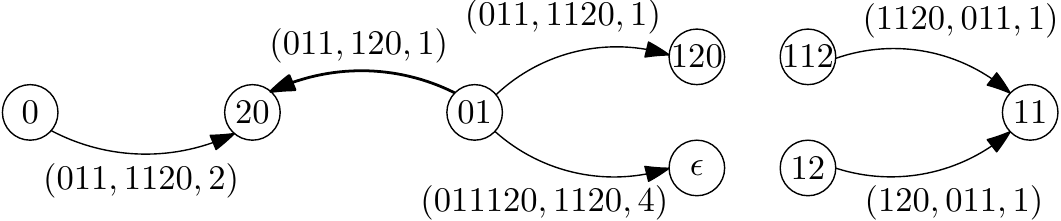}
		\caption{The graph of overhangs for Example~\ref{PKonstrukceGrafu}.} \label{fig:graph_example}
	\end{center}
	\end{figure}
\end{example}
To demonstrate usefulness of graphs of overhangs for the study of synchronizing delay, we first consider the simpler case of circular codes.
\begin{lemma}\label{lem:circular-code}
	Given a PD0L-system $G = (\A, \varphi, w)$. The set $X = \{\varphi(a) \mid a \in \A\}$ is a code if and only if the graph of overhangs $GO_G$ does not contain a cycle containing the empty word as a vertex.

	The set $X$ is a circular code if and only if its graph of overhangs does not contain any cycle.
\end{lemma}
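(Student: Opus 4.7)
The strategy is to set up a correspondence between pairs of distinct $X$-factorizations of a common word and closed walks in $GO_G$ through the vertex $\varepsilon$, and, more generally, between pairs of distinct \emph{cyclic} $X$-factorizations of a common circular word and arbitrary cycles in $GO_G$. The vertex $\varepsilon$ plays the role of a ``synchronization'' position at which two factorizations agree.

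For the ``only if'' direction of both statements I would start from two distinct factorizations $w = u_1 u_2 \cdots u_m = v_1 v_2 \cdots v_n$ with $u_i, v_j \in X$ (in the circular case, from two cyclic $X$-factorizations of the same circular word as in the paper's definition). I would mark every position where a $u$-cut coincides with a $v$-cut and call these \emph{synchronization positions}. Between two consecutive synchronization positions the $u$- and $v$-cuts are disjoint, and sliding along $w$ from left to right one can extract a minimal overhang: take the shortest sequences $u_i\cdots u_{i'}$ and $v_j\cdots v_{j'}$ whose concatenations overlap along a nonempty common factor $x$ and such that no synchronization occurs strictly inside. Each such step satisfies (i)--(iv) of Definition~\ref{dfn:overgang}, the right overhang of one step equals the left overhang of the next, and the induced sequence of residuals traces a walk in $GO_G$. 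In the code case the walk begins and ends at $\varepsilon$ because the two factorizations agree at both ends of $w$; since they are distinct the walk is nontrivial and contains a cycle through $\varepsilon$. In the circular case, if the two cyclic factorizations share any synchronization position we reduce to the linear case, and otherwise the induced closed walk avoids $\varepsilon$ entirely and still contains a cycle.

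For the converse I would stitch labels together: given a cycle $s_0 \to s_1 \to \cdots \to s_k = s_0$, each edge is labelled by a triplet $(u^{(i)}_1 \cdots u^{(i)}_{m_i}, v^{(i)}_1 \cdots v^{(i)}_{n_i}, |x_i|)$ with left overhang $s_i$ and right overhang $s_{i+1}$. Compatibility of these residuals at the shared vertices lets one concatenate the two $X$-sequences along the cycle into two $X$-factorizations of a single word, linear if $s_0 = \varepsilon$ and circular otherwise; condition (iii) of Definition~\ref{dfn:overgang} rules out trivial matchings and forces the two resulting factorizations to differ. A cycle through $\varepsilon$ therefore exhibits a failure of the code property, while a cycle avoiding $\varepsilon$ exhibits only a failure of the \emph{circular} code property, in line with the weaker definition.

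The main obstacle will be to make the minimal-overhang decomposition in the ``only if'' direction canonical and to verify that consecutive overhangs share the expected residual exactly once. Condition (iv) of Definition~\ref{dfn:overgang} is crucial here: by forcing $|v_1 \cdots v_{n-1}|$ to be strictly less than the length of the portion of the common factor lying inside $u_1$, it ensures that each overhang consumes the smallest possible chunk of the $v$-sequence needed to cross the current block, so that the residual is uniquely determined and the next overhang begins cleanly where the previous one ended. The bookkeeping for the converse (that gluing does not double count $u$- or $v$-blocks) is the mirror image of the same argument and is what makes the correspondence into a genuine equivalence.
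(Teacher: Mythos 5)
Your proposal is correct and follows essentially the same route as the paper: the paper likewise converts two distinct (cyclic) $X$-factorizations into a chain of overhangs read off as in Figure~\ref{fig:idea}, yielding a cycle in $GO_G$ (through $\varepsilon$ in the code case), and conversely concatenates the labels along a cycle $s_1 \to \cdots \to s_\ell \to s_1$ into the word $y = s_1x_1s_2\cdots s_\ell x_\ell = v_1\cdots v_\ell = s_1u_1\cdots u_\ell(s_1)^{-1}$, contradicting (circular) codeness. Your added remarks on synchronization positions and on the role of condition (iv) in making the overhang decomposition canonical only flesh out details the paper leaves implicit.
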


\begin{proof}
	We prove the second part of the statement, the first part can be proved analogously.
	
	Assume that the graph of overhangs contains a cycle
$$
s_1 \xrightarrow[(v_1, u_1, \vert x_1 \vert)]{} s_2  \xrightarrow[(v_2, u_2, \vert x_2 \vert)]{} s_3 \cdots  s_\ell  \xrightarrow[(v_\ell, u_\ell, \vert x_\ell \vert)]{} s_1\, .
$$
For the word $y = s_1x_1s_2 \ldots s_\ell x_\ell$ we have 
$$
y = s_1x_1s_2 \ldots s_\ell x_\ell = v_1v_2 \cdots v_\ell = s_1u_1u_2 \ldots u_\ell (s_1)^{-1} \, ,
$$
where $v_i, u_j \in X^+$. It follows that $X$ is not a circular code by definition.

If $X$ is not a circular code, there is a word $z \in X^+$ such that
$$
z = v_1v_2 \ldots v_n = su_1u_2 \ldots u_m(s)^{-1} \, ,
$$ 
where $v_i, u_j \in X$ and $s$ is a suffix of $u_m$. These two decompositions of $z$ give an analogue of two non-synchronized interpretation of $z$. Using the same procedure as in Figure~\ref{fig:idea}, we can find numbers $1 \leq n_1 < \cdots < n_{k-1} < n$ and  $1 \leq m_1 < \cdots < m_{k-1} < m$ and words $x_i$ and $s_i$, $1 \leq i \leq k - 1$, so that we get the following cycle in the graph of overhangs:
$$
s \xrightarrow[(v_1 \cdots v_{n_1}, u_1 \cdots u_{m_1}, \vert x_1 \vert)]{} s_2 \cdots s_{k-1} \xrightarrow[(v_{n_{k-1}} \cdots v_n,u_{m_{k-1}} \cdots u_m , \vert x_{k-1} \vert)]{} s \, .
$$ 
\end{proof}
To explain the connection between circular codes and circular D0L-systems, consider a D0L-system $G = (\A, \varphi, w)$ that is not circular. It implies that for any $L \in \N$ there must be a word longer than $L$ with two non-synchronized interpretations. These two non-synchronized interpretations can be decomposed into overhangs as in Figure~\ref{fig:idea}. If $L$ is big enough, there must be two overhangs so that the left overhang of one of them  equals the right overhang of the other. Therefore they can be glued together to form a cycle which means that the set $X = \{\varphi(a) \mid a \in \A\}$ is not a circular code.

By Lemma~\ref{lem:circular-code} and by the previous paragraph, if a D0L-system $G$ is not circular, then $GO_G$ must contain an infinite walk and this happens if and only if $GO_G$ contains a cycle. However, existence of a cycle in $GO_G$ is not a sufficient condition as follows from the following example. 
\begin{example}
It is known that Thue-Morse D0L-system $G_{TM} = (\{0, 1\}, \varphi_{TM}, 0)$, where $\varphi_{TM}(0) = 01$ and $\varphi_{TM}(1) = 10$, is circular with minimal synchronizing delay $Z_\text{min} = 3$. However, its graph of overhangs contains two cycles, see Figure~\ref{OPrikladGrafTM}. These to cycles corresponds to words $(01)^j$ or $(10)^j$ for $j = 1,2,3,\ldots$, but such words are in $S(L(G_{TM}))$ only for $j \leq 2$.
\begin{figure}[!h]
\begin{center}
\includegraphics{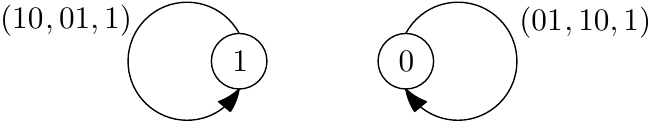}
\caption{Graph of overhangs of the D0L-system $G_{TM} = (\{0,1\}, \varphi_{TM}, 0)$.} \label{OPrikladGrafTM}
\end{center}
\end{figure}
\end{example}

\begin{definition}\label{def:g-admissible}
	Let $\ell > 1$ and
	$$
s_1 \xrightarrow[(v_1, u_1, \vert x_1 \vert)]{} s_2  \xrightarrow[(v_2, u_2, \vert x_2 \vert)]{} s_3 \cdots  s_{\ell} \xrightarrow[(v_\ell, u_\ell, \vert x_\ell \vert)]{} s_{\ell + 1}
$$
be a walk in the graph of overhangs of a PD0L-system $G = (\A, \varphi, w)$. We call this walk \emph{\mbox{$G$-admissible}} if both the words $v_1 \cdots v_\ell$ and $u_1 \cdots u_\ell$ are in $S(L(G))$ and $u_i, v_i \in \{\varphi(a) \mid a \in \A\}^+$ for all $1 \leq i \leq \ell$.

The number $|(s_1)^{-1}v_1\cdots v_\ell|$ is called the \emph{word-length} of the walk.
\end{definition}

With these definitions, we can state a sufficient and necessary condition for $G$ being non-circular: $G$ is non-circular if for any $L \in \N$ there is a $G$-admissible walk in $GO_G$ of word-length greater than $L$. Moreover, if $G$ is circular, then the word-longest $G$-admissible walk is tightly connected with the longest non-synchronized factor and so with the constant $Z_\text{min}$.
\begin{lemma}\label{lem:longest-walk}
	Let $G$ be a circular PD0L-system. Let $L_\text{max}$ denote the maximum $L$ such that there is a $G$-admissible walk in $GO_G$ of word-length $L$. It holds that
	$$
		L_\text{max} \leq Z_\text{min} \leq L_\text{max} + 2M - 3,
	$$
	where $M = \max_{a \in \A} |\varphi(a)|$.
\end{lemma}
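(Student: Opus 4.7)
The plan is to prove the two inequalities separately, exploiting the correspondence between $G$-admissible walks and words with two non-synchronized interpretations that is suggested by Figure~\ref{fig:idea}.

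For the lower bound $L_{\max}\le Z_{\min}$, I would take any $G$-admissible walk
$$ s_1 \xrightarrow[(v_1,u_1,|x_1|)]{} s_2 \xrightarrow[(v_2,u_2,|x_2|)]{} \cdots \xrightarrow[(v_\ell,u_\ell,|x_\ell|)]{} s_{\ell+1} $$
of word-length $L_{\max}$ and exhibit a witness word. The overhang relations $v_i=s_i x_i$ and $u_i=x_i s_{i+1}$ telescope to $s_1 u_1\cdots u_\ell = v_1\cdots v_\ell\, s_{\ell+1}$, so the word $y:=s_1^{-1}(v_1\cdots v_\ell)=(u_1\cdots u_\ell)s_{\ell+1}^{-1}$ has length exactly $L_{\max}$ and lies in $S(L(G))$ as a factor of $v_1\cdots v_\ell$. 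The two $\varphi$-factorizations of $y$ inherited from the walk produce two interpretations $(s_1,V,\varepsilon)$ and $(\varepsilon,U,s_{\ell+1})$, where $V$ and $U$ are the $\varphi$-preimages of $v_1\cdots v_\ell$ and $u_1\cdots u_\ell$. These interpretations have disjoint image-boundary sets on $y$—a consequence of the strict positivity of each $|x_i|$ (Definition~\ref{dfn:overgang}(i)–(ii)) and, on a longest walk, of the intermediate vertices $s_i$ ($2\le i\le\ell$) being non-empty. Hence $y$ has no synchronizing point, and the definition of $Z_{\min}$ gives $|y|\le Z_{\min}$.

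For the upper bound $Z_{\min}\le L_{\max}+2M-3$, I would argue by contradiction. Suppose $w\in S(L(G))$ satisfies $|w|>L_{\max}+2M-3$ yet has no synchronizing point; then $w$ admits two non-synchronized interpretations $(p_1,V,t_1)$ and $(p_2,U,t_2)$. Following the construction in Figure~\ref{fig:idea}, I would extract a $G$-admissible walk in $GO_G$ from these interpretations by scanning $w$ from left to right and, at each misalignment of image boundaries, grouping the minimal number of consecutive $\varphi$-images on either side needed to form the next overhang, linking successive overhangs via their shared vertices.

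It remains to estimate the walk's word-length from below. The only portions of $w$ not covered by the walk are two fringes at its extremities, whose lengths are controlled by $|p_1|,|p_2|,|t_1|,|t_2|$. Each of these is at most $M-1$ (being a proper prefix or suffix of some $\varphi$-image of length at most $M$), yielding an a priori fringe bound $2(M-1)=2M-2$. A careful boundary analysis—using that the leftmost and rightmost common factors $x_1,x_\ell$ have length $\ge 1$ by Definition~\ref{dfn:overgang}—shaves off one more unit to give $2M-3$, so the constructed walk has word-length strictly greater than $L_{\max}$, contradicting the maximality of $L_{\max}$. The principal obstacle is precisely this sharpening from $2M-2$ down to $2M-3$, which requires a delicate case analysis at the ends of $w$ combining the overhang conditions with the non-synchronization hypothesis for the two chosen interpretations.
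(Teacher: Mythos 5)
Your overall strategy is the same as the paper's: the lower bound is obtained by reading off a non-synchronized witness word of length $L_\text{max}$ from a word-longest $G$-admissible walk, and the upper bound by comparing an arbitrary non-synchronized factor with the word of such a walk and bounding the two end fringes by which it can exceed it. The lower-bound half is fine and matches the paper (which states it even more tersely than you do).

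The gap is in the upper bound, and you have located it yourself without closing it: you establish only the a priori fringe bound $2(M-1)=2M-2$ and then assert that ``a careful boundary analysis shaves off one more unit,'' which is precisely the inequality to be proved. The paper's mechanism is not a symmetric bound plus a correction but an asymmetric split. Writing the word of the word-longest walk $p$ as $u_1\cdots u_\ell(s_{\ell+1})^{-1}=(s_1)^{-1}v_1\cdots v_\ell$, it bounds the left prolongation $x$ (a word such that $xu_1\cdots u_\ell(s_{\ell+1})^{-1}$ is still non-synchronized) by $|x|\leq M-2$: if $|x|\geq M-1$, the non-synchronized overlap already extends far enough beyond the left end of $u_1$ to yield one further overhang, hence a walk word-longer than $p$, contradicting maximality. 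The right prolongation $y$ is bounded only by $|y|\leq M-1$, and $(M-2)+(M-1)=2M-3$ is where the constant comes from. So the missing ingredient is the observation that maximality of the walk forces the stronger bound $M-2$ on one specific side; without some such argument your proof only yields $Z_\text{min}\leq L_\text{max}+2M-2$. (A smaller point: your parenthetical justification that intermediate vertices of a longest walk are non-empty is not obviously correct as stated --- an empty intermediate vertex splits a walk into two walks whose word-lengths add up exactly, so maximality alone does not exclude it; in the paper's intended application to uniform morphisms all vertices have positive length $k-\ell$, so the issue does not arise there.)
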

\begin{proof}
	Let the word-longest walk in $GO_G$ be denoted by
	$$
p = s_1 \xrightarrow[(v_1, u_1, \vert x_1 \vert)]{} s_2  \xrightarrow[(v_2, u_2, \vert x_2 \vert)]{} s_3 \cdots  s_{\ell} \xrightarrow[(v_\ell, u_\ell, \vert x_\ell \vert)]{} s_{\ell + 1}.
$$
By the construction of $GO_G$ we have that $(s_1)^{-1}v_1\cdots v_\ell$ is non-synchronized. Since $Z_\text{min}$ equals the length of the longest non-synchronized factor from $S(L(G))$, it holds that $|(s_1)^{-1}v_1\cdots v_\ell| = L_\text{max} \leq Z_\text{min}$. 
To obtain an upper bound, consider the longest possible word $x$ such that $xu_1\cdots u_\ell(s_{\ell+1})^{-1}$ is a non-synchronized factor in $S(L(G))$. It is easy to see that if $|x| \geq M-1$, then we can find a word-longer walk in $GO_G$ than $p$. Therefore $|x| \leq M - 2$. Analogously we prove that if $(s_1)^{-1}v_1\cdots v_\ell y$ is non-synchronized, than $|y| \leq M - 1$ (see Figure~\ref{fig:longest-walk}).
\begin{figure}[!h]
\begin{center}
\includegraphics{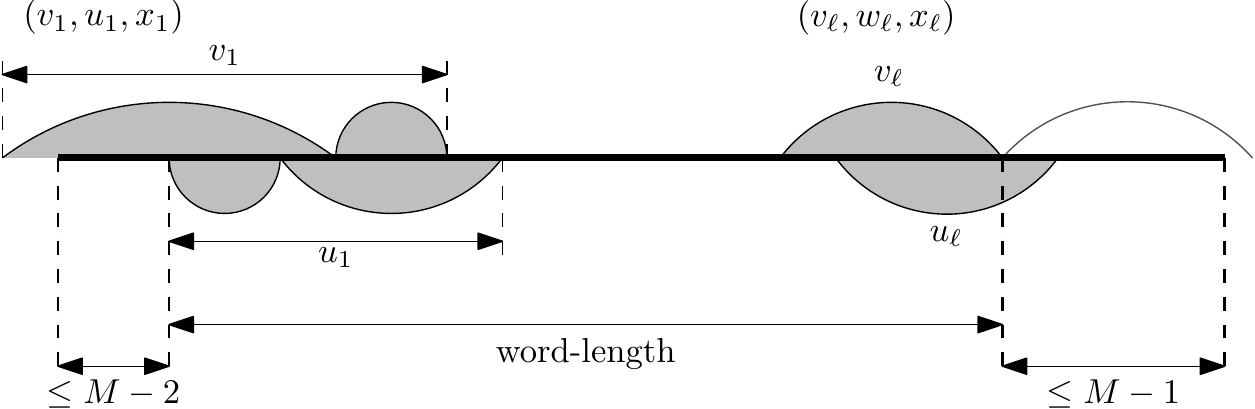}
\caption{The bold line denotes the longest possible prolongation of a non-synchronized factor corresponding to the word-longest walk in $GO_G$.} \label{fig:longest-walk}
\end{center}
\end{figure}
\end{proof}
In the following sections we use the last lemma to get an upper bound on $Z_\text{min}$ by finding the word-longest possible walk in $GO_G$ for $G$ with uniform binary morphisms. 

\section{Proof of the main result} \label{sec:the-proof}

There is no known estimate on the (minimal) synchronizing delay of a PD0L-system. Here we restrict ourselves to the simplest class of morphisms: uniform morphisms defined on a binary alphabet. We further assume that the axiom of the D0L-system is a letter; considering a general case would not bring much novelty but technical difficulties.
\begin{definition}
	A morphism $\varphi$ on the binary alphabet $\{a,b\}$ is \emph{$k$-uniform} for some integer $k \geq 2$ if $|\varphi(a)| = |\varphi(b)| = k$.
\end{definition}
Because of the simple structure of D0L-systems with a $k$-uniform morphism, we can find an explicit list of morphisms that are not circular. We state a proof here as it is quite simple using the results from~\cite{KlSt13}. However, a complete characterisation of repetitive (and so non-circular) D0L-systems with binary morphisms was done in~\cite{KoOtSe97}. Partial characterisation of circular D0L-systems with a binary $k$-uniform morphism was also given in~\cite{Fr98, Fr98_DOL}. 
\begin{lemma} \label{LNoncircularMorfism}
	A D0L-system $G = (\{a, b\}, \varphi, a)$ with $k$-uniform morphism $\varphi$ is not circular if and only if one of the following conditions is satisfied: 
\begin{enumerate}[(i)]
\item $\varphi(a) = \varphi(b)$,
\item $\varphi(a) = a^k$ or $\varphi(b) = b^k$, 
\item $\varphi(a) = b^k$ and $\varphi(b) = a^k$,
\item $k = 2m + 1, m \geq 1$, and $\varphi(a) = (ab)^ma$, $\varphi(b) = (ba)^mb$,
\item $k = 2m + 1, m \geq 1$, and $\varphi(a) = (ba)^mb$, $\varphi(b) = (ab)^ma$.
\end{enumerate}
\end{lemma}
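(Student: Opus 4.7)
The statement splits into an ``if'' and an ``only if'' direction. For the easy direction, I check that each of (i)--(v) forces non-circularity. Condition (i) is immediate: $\varphi(a)=\varphi(b)$ destroys injectivity already on $\{a,b\}\subseteq S(L(G))$, so the definition of circularity fails at the outset. For (ii) and (iii), direct iteration gives $\varphi^n(a)=a^{k^n}$ and $\varphi^{2n}(a)=a^{k^{2n}}$ respectively, so $a^N\in S(L(G))$ for every $N$; since every letter of a $k$-uniform morphism with $k\geq 2$ is unbounded, $G$ is unboundedly repetitive and hence not circular by~\cite{KlSt13a}. For (iv) and (v), a short computation yields $\varphi(ab)\in\{(ab)^k,(ba)^k\}$, so $(ab)^n\in S(L(G))$ for arbitrarily large $n$ and the same conclusion applies.

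For the converse, assume $G$ is not circular. By~\cite{KlSt13a} $G$ is unboundedly repetitive, so there exists a primitive word $v$ with $v^n\in S(L(G))$ for every $n\in\N$. The crucial preparatory step is to show $|v|\leq 2$: I locate $v^n$ inside some $\varphi^m(a)$ for large $n$, use the $k$-block partition of $\varphi^m(a)$ induced by images of letters, and compare the period $|v|$ of the factor with this partition via Fine--Wilf (or a direct desubstitution). This forces a periodic factor of period at most $|v|$ at level $m-1$, and iterating the reduction brings the periodicity down to the letter level, where on $\{a,b\}$ the only primitive possibilities are $a$, $b$ and $ab$ (absorbing $ba$ as a conjugate of $ab$).

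With $v\in\{a,b,ab\}$ in hand, a short case analysis closes the argument. If $v=a$, then arbitrarily long runs of $a$ in the iterates force some $\varphi(c)=a^k$ (otherwise every image contributes only a bounded $a$-run and these cannot pump); sustaining the runs through iteration then forces either (ii) (when $c=a$) or the complementary relation $\varphi(a)=b^k$, i.e.\ (iii) (when $c=b$), with the degenerate branch $\varphi(a)=\varphi(b)$ falling into (i). The case $v=b$ is symmetric. If $v=ab$, then $\varphi(a)\varphi(b)$ must itself be a power of a conjugate of $ab$ of length $2k$, so $\varphi(a)\varphi(b)\in\{(ab)^k,(ba)^k\}$; splitting at position $k$ identifies $\varphi(a)$ and $\varphi(b)$, and a parity check on $k$ shows that for $k$ even one always obtains $\varphi(a)=\varphi(b)$ (hence (i)), whereas for $k$ odd one lands in exactly (iv) or (v).

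The main obstacle is the reduction $|v|\leq 2$: the interplay between $|v|$ and the $k$-block partition is delicate when $\gcd(|v|,k)>1$, and one must ensure that desubstitution actually yields a shorter periodic witness rather than merely a conjugate with the same period length. Here I would lean on the explicit algorithmic test of unbounded repetitiveness from~\cite{KlSt13}, which in the $k$-uniform binary setting effectively enumerates the morphism shapes generating unbounded repetitions; once that enumeration is in hand, matching the output against (i)--(v) is a finite routine check.
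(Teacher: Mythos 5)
Your overall architecture matches the paper's: reduce non-circularity to (unbounded) repetitiveness via the cited equivalence, bound the length of the repeated primitive word by $2$, and finish with a finite case analysis. The ``if'' direction and the concluding case analysis over $v\in\{a,b,ab\}$ are essentially correct (modulo the small point that when a long $(ab)^N$ desubstitutes to a run of a single letter you must fall back to the cases $v=a$ or $v=b$; you handle this only implicitly). The genuine gap is exactly the step you yourself flag as ``the main obstacle'': the reduction $|v|\leq 2$. You neither carry out the Fine--Wilf/desubstitution argument (and the difficulty you name is real: starting only from ``$v^n\in S(L(G))$ for all $n$'', desubstituting a $|v|$-periodic factor of $\varphi^m(a)$ does not obviously yield a strictly shorter periodic witness when $\gcd(|v|,k)>1$), nor do you state which result of \cite{KlSt13} you would substitute for it. As written, the proof is incomplete at its pivotal point.

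The paper closes precisely this gap by quoting the characterization from \cite{KlSt13} in a sharp form: $G$ is repetitive if and only if there is a \emph{primitive} word $u\in S(L(G))$ with $\varphi^j(u)=u^\ell$ for some $j\geq 1$, $\ell\geq 2$, where moreover $u$ contains no unbounded letter twice and $j$ is bounded by the alphabet size. On a binary alphabet where both letters are unbounded this immediately forces $u\in\{a,b,ab,ba\}$ and $j\in\{1,2\}$, leaving eight mechanical cases to match against (i)--(v). Note that this borrowed statement is strictly stronger than your starting point: it bounds both the period of the repetition and the iteration exponent, and it replaces your run-propagation arguments by the single equation $\varphi^j(u)=u^\ell$. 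If you import that statement explicitly, your case analysis goes through; without it, you must actually prove the reduction $|v|\leq 2$, which is the hard part and is missing.
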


\begin{proof}
	As explained above, circular D0L-systems are not unboundedly repetitive. Since the D0L-system $G$ cannot contain a bounded letter (as it is $k$-uniform with $k > 1$), it is unboundedly repetitive if and only if it is repetitive. Hence, we need to find all $\varphi$ for which the D0L-system is non-injective or repetitive. Clearly, a $k$-uniform morphism $\varphi$ is not injective if and only if $\varphi(a) = \varphi(b)$ (condition $(i)$). It follows from~\cite{KlSt13} that $G$ is repetitive if and only if there is a primitive word $u$ (i.e., a word for which $u = z^\ell$ implies $\ell = 1$) in $S(L(G))$ such that $\varphi^j(u) = u^\ell$ for some integers $j \geq 1$ and $\ell \geq 2$. Moreover, the factor $u$ cannot contain any unbounded letter twice and $j$ is bounded by the number of letters in the alphabet: hence we must have $u \in \{a,b,ba,ab\}$ and $j \in \{1,2\}$. Analysing all these eight possible cases we obtain conditions $(ii)-(v)$.
\end{proof}

\subsection{List of all possible components of graphs of overhangs}

In general, even for a simple-looking morphism over a small alphabet the corresponding graph of overhangs can be of very complex structure. However, in the case of (binary) uniform morphisms, these graphs are always divided into components containing relatively small number of vertices; for a binary uniform morphism this number is at most four as follows from these two simple observations:
\begin{lemma}
Given a D0L-system $G = (\A, \varphi, w)$ with a $k$-uniform morphism $\varphi$. For the graph of overhangs of $G$ the following holds:
\begin{enumerate}[(i)]
	\item A label of any edge is of the form of $(\varphi(a), \varphi(b), \ell)$ with $a,b \in \A$ and $0 < \ell < k$.
	\item If there is an edge from a vertex $s_1$ to a vertex $s_2$ with label  $(\varphi(a), \varphi(b), \ell)$, then the words $s_1$ and $s_2$ are of the same length $k - \ell$.
\end{enumerate}
\end{lemma}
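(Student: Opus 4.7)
The plan is to exploit the $k$-uniformity of $\varphi$ directly against the four defining conditions of an overhang in Definition~\ref{dfn:overgang}. Since every $u_i$ and $v_j$ has length exactly $k$, each of those conditions becomes a purely numerical constraint on $|x|$, $m$, and $n$, and these constraints alone will collapse the overhang to the case $m = n = 1$ with $0 < |x| < k$.

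For part (i), I would read off four length bounds. Condition (i) of Definition~\ref{dfn:overgang} gives $|x| > |u_2 \cdots u_m| = (m-1)k$, condition (ii) analogously gives $|x| > (n-1)k$, and the fact that $x$ is a suffix of $u_1\cdots u_m$ and a prefix of $v_1\cdots v_n$ yields $|x| \leq mk$ and $|x| \leq nk$. Rewriting condition (iv) with the identity $|x(u_2\cdots u_m)^{-1}| = |x| - (m-1)k$ produces the key inequality $|x| > (m+n-2)k$. Combining this with $|x| \leq mk$ forces $n < 2$, and with $|x| \leq nk$ forces $m < 2$, so $m = n = 1$. The label is therefore of the form $(\varphi(a),\varphi(b),\ell)$ with $a,b \in \A$ and $\ell = |x|$. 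Positivity $\ell > 0$ is built into Definition~\ref{dfn:overgang} (the word $x$ is non-empty), while $\ell < k$ follows from condition (iii): if $\ell = k$, then $x$ would be simultaneously a suffix of length $k$ of $u_1$ and a prefix of length $k$ of $v_1$, forcing $x = u_1 = v_1$, which violates (iii).

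For part (ii), once $m = n = 1$ is known, the left and right overhangs are by definition the words $u_1 x^{-1} = \varphi(a) x^{-1}$ and $x^{-1} v_1 = x^{-1}\varphi(b)$, each of length $k - \ell$. By Definition~\ref{def:graph_of_overhangs} the endpoints $s_1$ and $s_2$ of the edge are precisely these two words, so $|s_1| = |s_2| = k - \ell$.

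The argument is essentially a short arithmetic computation, so no serious obstacle is expected. The only point worth handling carefully is the correct algebraic reading of condition (iv); it is this condition alone that rules out overhangs spanning two or more images of letters on either side and thereby reduces the structure of $GO_G$ to edges between vertices of equal, easily controlled length.
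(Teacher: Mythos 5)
Your proof is correct and complete. The paper actually states this lemma without any proof (it is introduced as one of ``two simple observations''), so there is nothing to compare against; your argument supplies exactly the missing details. The key steps all check out: for a $k$-uniform morphism, condition (i) of Definition~\ref{dfn:overgang} forces $|x|>(m-1)k$ (any shorter suffix of $u_1\cdots u_m$ is automatically a suffix of $u_2\cdots u_m$), condition (ii) forces $|x|>(n-1)k$, condition (iv) sharpens this to $|x|>(m+n-2)k$, and together with $|x|\leq mk$ and $|x|\leq nk$ this yields $m=n=1$; the bounds $0<\ell<k$ then follow from non-emptiness of $x$ and condition (iii), and part (ii) of the lemma is immediate from the definitions of left and right overhang.
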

Applied on the case of binary morphism we obtain this:
\begin{corollary}\label{col:components}
	Given a D0L-system $G = (\A, \varphi, w)$ with an injective $k$-uniform morphism $\varphi$ defined over the binary alphabet $\A = \{a, b\}$. The graph of overhangs $GO_G$ consists of weakly connected components each containing at most four vertices. Labels of these vertices are all of the same length $k - \ell$, $0 < \ell < k$, and the edges are labelled by four possible labels: $(\varphi(a), \varphi(a), \ell), (\varphi(a), \varphi(b), \ell), (\varphi(b), \varphi(a), \ell), (\varphi(b), \varphi(b), \ell)$.
\end{corollary}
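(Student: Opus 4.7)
The plan is to build directly on the preceding Lemma, combining the length constraint from part (ii) with the label form from part (i). First I fix a weakly connected component $C$ of $GO_G$ and argue that all edges of $C$ carry the same overlap length $\ell$. By part (ii), the endpoints of any edge labelled with overlap $\ell$ are words of length exactly $k - \ell$, so two edges sharing a vertex must assign it the same length and hence use the same $\ell$; propagating this equality along any undirected walk inside $C$ forces a single $\ell \in \{1, \ldots, k-1\}$ throughout $C$, and every vertex of $C$ therefore has the common length $k - \ell$.

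Once $\ell$ is fixed, I enumerate the possible labels inside $C$. By part (i) each label has the form $(\varphi(x), \varphi(y), \ell)$ with $x, y \in \A$, and since $\A = \{a, b\}$ this gives at most the four labels listed in the statement. For such a label the common factor is simultaneously the length-$\ell$ suffix of $\varphi(x)$ and the length-$\ell$ prefix of $\varphi(y)$, so Definition~\ref{dfn:overgang} forces the source of the corresponding edge to be the length-$(k - \ell)$ prefix of $\varphi(x)$ and the target to be the length-$(k - \ell)$ suffix of $\varphi(y)$. Denote these candidate prefixes by $p_a, p_b$ and suffixes by $s_a, s_b$.

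Every edge in $C$ then has its source in $\{p_a, p_b\}$ and its target in $\{s_a, s_b\}$, so the vertex set of $C$ is contained in $\{p_a, p_b, s_a, s_b\}$, a set of cardinality at most four (coincidences such as $p_a = s_a$ are allowed but only reduce the count). The only step requiring genuine thought is the propagation argument that forces a common $\ell$ throughout $C$; the rest is straightforward bookkeeping from the Lemma together with the fact that $|\A| = 2$, so I do not expect any substantial obstacle.
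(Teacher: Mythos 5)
Your proposal is correct and follows essentially the same route as the paper: fix $\ell$ within a weakly connected component via the length constraint, note that only four labels are possible, and observe that each label's left (resp.\ right) overhang is determined as the length-$(k-\ell)$ prefix (resp.\ suffix) of $\varphi(a)$ or $\varphi(b)$, giving at most four vertices. Your explicit naming of the candidates $p_a, p_b, s_a, s_b$ is just a cleaner packaging of the paper's observation that certain pairs of labels share left or right overhangs.
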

\begin{proof}
	It is a direct consequence of the previous lemma that $GO_G$ is divided into weakly connected components and also that within each component all possible labels of its edges are four overhangs $(\varphi(a), \varphi(a), \ell), (\varphi(a), \varphi(b), \ell), (\varphi(b), \varphi(a), \ell), (\varphi(b), \varphi(b), \ell)$. Further, the left overhangs of $(\varphi(a), \varphi(a), \ell)$ and $(\varphi(a), \varphi(b), \ell)$ are the same words. The same is true for $(\varphi(b), \varphi(a), \ell)$ and $(\varphi(b), \varphi(b), \ell)$. Similarly, the right overhangs are equal words for $(\varphi(a), \varphi(a), \ell)$ and $(\varphi(b), \varphi(a), \ell)$ and for $(\varphi(a), \varphi(b), \ell)$ and  $(\varphi(b), \varphi(b), \ell)$ (see Figure~\ref{fig:maximal_component}). It follows that each component has at most four vertices.
	\begin{figure}[ht]
	\begin{center}
		\includegraphics{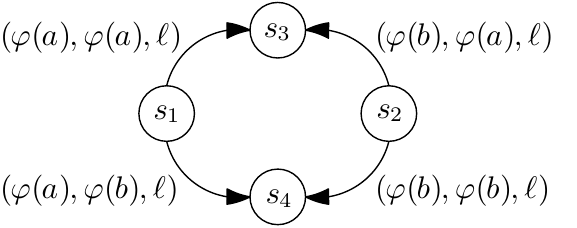}
		\caption{An illustration of a weakly connected component of a graph of overhangs for a binary $k$-uniform morphism.} \label{fig:maximal_component}
	\end{center}
	\end{figure}
\end{proof}

As explained above, we are interested in the word-longest $G$-admissible walks (see Definition~\ref{def:g-admissible}) in the graphs of overhangs. The previous corollary says that it is sufficient to look for such walks only in components (in what follows by components we always mean the weakly connected components from Corollary~\ref{col:components}) with at most four vertices. The component in Figure~\ref{fig:maximal_component} can be considered as maximal, since all other components that can appear in the graph of overhangs for an injective binary $k$-uniform morphism arise by merging some vertices of it and by omitting some edges. In other words, all possible components are subgraphs of the graphs depicted in Figures~\ref{fig:maximal_component}, \ref{fig:components_with_3_vertices}, \ref{fig:components_with_2_vertices} and~\ref{fig:components_with_1_vertex} (labels are denoted by $\frac{\varphi(x)}{\varphi(y)}$ instead of $(\varphi(x), \varphi(y), \ell)$ in order to save some space).
\begin{figure}[ht]
	\begin{center}
		\includegraphics{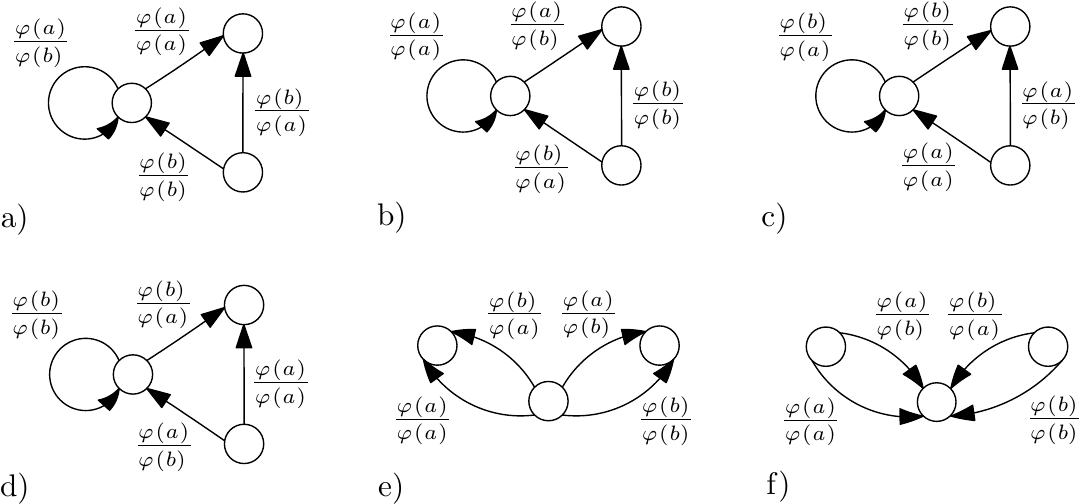}
		\caption{All possible components with three vertices arise by ommiting edges in one of these six graphs.} \label{fig:components_with_3_vertices}
	\end{center}
\end{figure}
\begin{figure}[ht]
	\begin{center}
		\includegraphics{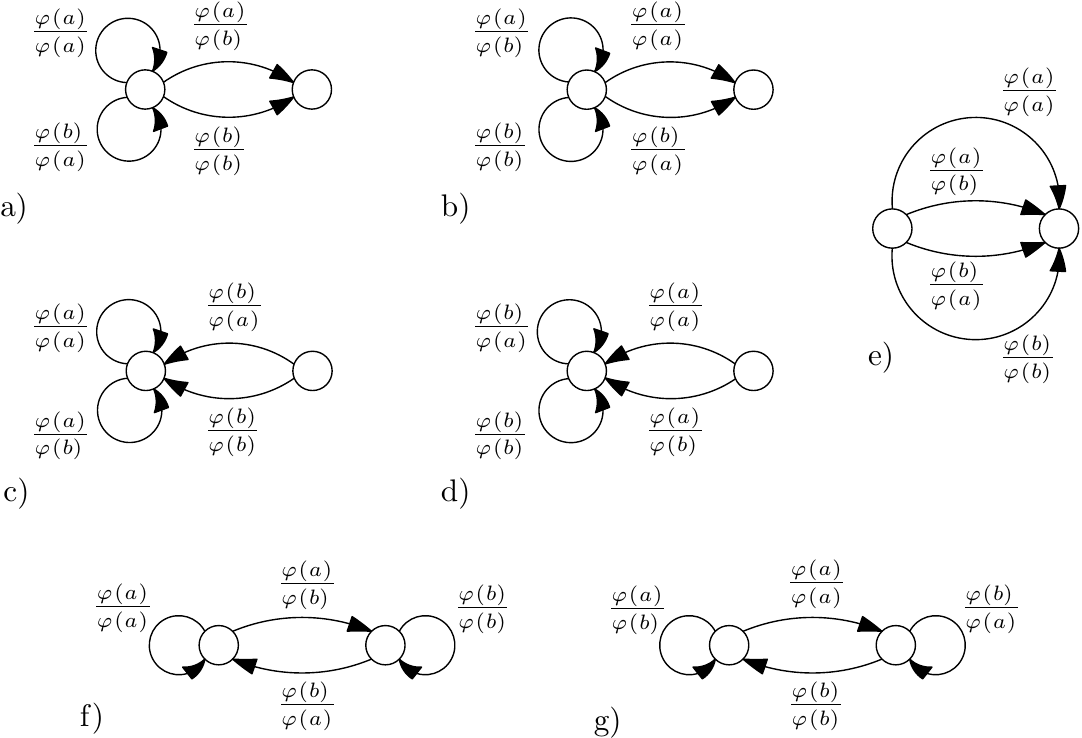}
		\caption{All possible components with two vertices arise by ommiting edges in one of these seven graphs.} \label{fig:components_with_2_vertices}
	\end{center}
\end{figure}
\begin{figure}[ht]
	\begin{center}
		\includegraphics{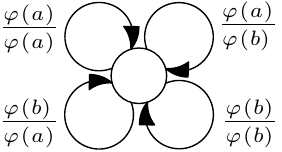}
		\caption{All possible components with one vertex arise by ommiting edges in this graph.} \label{fig:components_with_1_vertex}
	\end{center}
\end{figure}

\subsection{Infeasible subgraphs}

Here we prove that the components of the graph of overhangs of circular D0L-systems with a binary $k$-uniform morphism cannot contain the following subgraphs: two loops on one vertex (Figure~\ref{fig:prohibited_structures} a)), a vertex with a loop that is at the same time a vertex of a cycle over two vertices (Figure~\ref{fig:prohibited_structures} b)) and two vertices with a loop that are connected with an edge (Figure~\ref{fig:prohibited_structures} c)). All the proofs are quite similar: we use the following three auxiliary lemmas to get a contradiction with circularity; namely, we show that these subgraphs appear in the graphs only for non-cicular D0L-systems, i.e., only for morphisms listed in Lemma~\ref{LNoncircularMorfism}.
\begin{figure}[!ht]
\begin{center}
\includegraphics{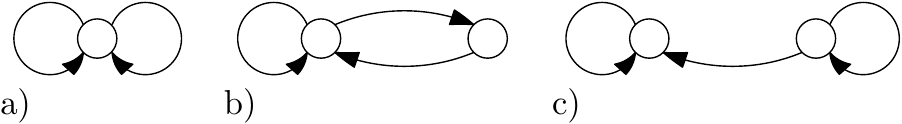}
\caption{The structures that never appear in the graphs of overhangs of a circular D0L-systems with a binary $k$-uniform morphism.} \label{fig:prohibited_structures}
\end{center}
\end{figure}
%

\begin{lemma}[Lyndon, Sch\"{u}tzenberger~\cite{LynSch62}] \label{LLyndon}
Let $x, y \in \A^+$. The following three conditions are equivalent:
\begin{enumerate}[(i)]
\item $xy  = yx$;
\item There exist integers $i, j > 0$ such that $x^i = y^j$;
\item There exist $z \in \A^+$ and integers $p, q > 0$ such that $x = z^p$ and $y = z^q$.
\end{enumerate}
\end{lemma}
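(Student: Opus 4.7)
The plan is to establish the three-way equivalence by proving the easy directions at once and then closing the loop via a strong induction on the combined length $|x|+|y|$. First, (iii) $\Rightarrow$ (i) and (iii) $\Rightarrow$ (ii) are immediate: if $x = z^p$ and $y = z^q$ then $xy = z^{p+q} = yx$, and $x^q = z^{pq} = y^p$, so we may take $i = q$, $j = p$.

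The crucial step is (i) $\Rightarrow$ (iii), which I would prove by strong induction on $|x|+|y|$. If $|x| = |y|$, then comparing the first $|x|$ letters of $xy$ and $yx$ yields $x = y$, and one takes $z = x$, $p = q = 1$. Otherwise assume without loss of generality $|x| > |y|$; the equality $xy = yx$ forces $y$ to be a prefix of $x$ (since $y$ is a prefix of $yx = xy$ and $|y| \leq |x|$), so we may write $x = y x'$ with $x' \in \A^+$. Substituting back into $xy = yx$ gives $y x' y = y y x'$, hence $x' y = y x'$, an instance with strictly smaller total length. The inductive hypothesis delivers $z \in \A^+$ and $p', q > 0$ with $x' = z^{p'}$ and $y = z^q$, so $x = y x' = z^{q + p'}$, completing the induction.

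For (ii) $\Rightarrow$ (iii) (or equivalently (ii) $\Rightarrow$ (i), since (i) $\Rightarrow$ (iii) is already in hand) I would reduce to the commutation case. From $x^i = y^j$ we obtain $x \cdot y^j = x \cdot x^i = x^{i+1} = x^i \cdot x = y^j \cdot x$, so $x$ and $y^j$ commute; applying the already-proved (i) $\Rightarrow$ (iii) to the pair $(x, y^j)$ yields a common root $\zeta$ with $x = \zeta^p$ and $y^j = \zeta^m$. Analogously, from $y \cdot x^i = x^i \cdot y$ the pair $(y, x^i)$ has a common root. To conclude $xy = yx$ (and then invoke (i) $\Rightarrow$ (iii) one last time), I would take $\zeta$ to be the primitive root of $x$, i.e.\ the shortest word of which $x$ is a power, and then use that a primitive word which is a root of $y^j$ must also be a root of $y$, so $y = \zeta^r$ for some $r$; together with $x = \zeta^p$ this gives $xy = \zeta^{p+r} = yx$.

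The main obstacle is the final step above: passing from "$x$ and $y^j$ share a common root" to "$y$ itself is a power of that root." This is where the uniqueness of the primitive root and the fact that the primitive root of $y^j$ coincides with that of $y$ are needed; if one prefers to avoid invoking primitive roots explicitly, the same conclusion can be obtained by a second induction on $|y|$, reducing $y^j = \zeta^m$ to a smaller instance of (ii) via the prefix/suffix trick used in the (i) $\Rightarrow$ (iii) induction.
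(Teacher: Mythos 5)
The paper does not prove this lemma at all --- it is quoted as a classical result of Lyndon and Sch\"utzenberger with a citation, so there is no in-paper argument to compare against. Judged on its own merits, your proof of the cycle $(iii)\Rightarrow(i)$, $(iii)\Rightarrow(ii)$ and $(i)\Rightarrow(iii)$ is complete and correct, and it is the standard induction on $|x|+|y|$; note also that $(i)\Leftrightarrow(iii)$ is the only part of the lemma the paper actually invokes.

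The implication $(ii)\Rightarrow(iii)$, however, has a genuine gap, and it sits exactly where you flag it. From $x^i=y^j$ you correctly get that $x$ commutes with $y^j$, hence a common root $\zeta$ of $x$ and $y^j$; but passing from ``$\zeta$ is a root of $y^j$'' to ``$y$ is a power of $\zeta$'' is not a smaller or easier fact --- the statement ``if $y^j=\zeta^m$ with $\zeta$ primitive then $y\in\zeta^+$'' (equivalently, that the primitive root of $y^j$ equals that of $y$) \emph{is} the implication $(ii)\Rightarrow(iii)$ for the pair $(y,\zeta)$, so invoking it is circular. Your fallback, a ``second induction on $|y|$,'' does not visibly terminate either: when the common root produced by $(i)\Rightarrow(iii)$ applied to $(x,y^j)$ is $x$ itself (which can happen), the new instance $y^j=\zeta^m$ has $|y|+|\zeta|=|y|+|x|$ and the same pair of exponents, so no measure decreases. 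A clean non-circular completion is: dispose of $i=1$ or $j=1$ directly (there $x=y^j$ or $y=x^i$ and a common root is immediate), and for $i,j\geq 2$ apply the Fine--Wilf periodicity lemma to the word $w=x^i=y^j$, which has periods $|x|$ and $|y|$ and length $|w|\geq 2\max\{|x|,|y|\}\geq |x|+|y|$, hence period $\gcd(|x|,|y|)$; the prefix of $w$ of that length is the desired $z$. Alternatively one can run a more careful induction on $|x|+|y|$ for $(ii)\Rightarrow(iii)$ directly, but the reduction is not the simple one-line cancellation that works for $(i)\Rightarrow(iii)$.
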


\begin{lemma}[Krieger~\cite{Kr09}]\label{L12}
Let $x, y, z ,t \in \A^+$. Assume the following equalities hold:
\begin{enumerate}[(i)]
\item $xy = zt$ (equivalently, $ty = zx$);
\item $yx = xz$ .
\end{enumerate}
Then $y = z$ and $x = t$. 
\end{lemma}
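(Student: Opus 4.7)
The plan is to exploit hypothesis (ii) to split into two cases based on the relative lengths of $x$ and $y$, and in each case to reduce to a commutation equation handled by Lemma~\ref{LLyndon}. As a preliminary step, comparing lengths in (ii) gives $|y|=|z|$, and then (i) yields $|x|=|t|$.

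If $|y| \leq |x|$, inspecting the first $|y|$ letters of (ii) shows that $y$ is a prefix of $x$. The first $|y|$ letters of $xy$ therefore form the word $y$, while the first $|z|=|y|$ letters of $zt$ form $z$, so (i) forces $y=z$. Substituting back, (ii) becomes $xy=yx$, so Lemma~\ref{LLyndon} provides a word $w$ with $x,y \in w^*$; since also $|t|=|x|$ and (i) reads $xy=yt$, this forces $t=x$.

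If $|y| > |x|$, then (ii) forces $x$ to be a prefix of $y$; writing $y=xy'$ and substituting into (ii) yields $z=y'x$. Plugging this into (i) produces the equation
\[
    x^2 y' = y' x t, \qquad |y'| < |y|.
\]
I would then iterate the same dichotomy on this new equation. When $|y'| \geq |x|$, writing $y'=xy''$ and cancelling the leading $x$ gives $x^2 y'' = y'' x t$, which has the same shape but strictly smaller second length. When instead $|y'| < |x|$, then $y'$ is a prefix of $x$; writing $x=y' x''$, cancelling, and comparing the first $|x''|+|y'|$ letters yields $x'' y' = y' x''$. Lemma~\ref{LLyndon} then gives a common root $w$ of $x''$ and $y'$, whence $x$, $y=xy'$ and $z=y'x$ are all powers of $w$ and in particular $y=z$; equation (i) again forces $t=x$.

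The main obstacle I anticipate is the bookkeeping of this iterated reduction: one must verify that every cancellation preserves the equation shape while strictly decreasing a non-negative length parameter, so that the induction terminates at a configuration where a commutation of two nonempty words is visible. Once that is done, Lemma~\ref{LLyndon} closes every branch uniformly.
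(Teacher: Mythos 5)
Your proposal is correct in substance, but be aware that the paper offers no proof of this lemma to compare against: it is imported verbatim from Krieger~\cite{Kr09}. Your elementary derivation---prefix comparisons split on $|y|$ versus $|x|$, a length-decreasing reduction of $x^2y'=y'xt$, and a terminal appeal to Lemma~\ref{LLyndon}---is sound and would make the statement self-contained, at the cost of the bookkeeping you yourself flag. Two points need to be made explicit to close it. First, after $n$ cancellations you have $y=x^{n}y^{(n)}$ and $z=y^{(1)}x=x^{n-1}y^{(n)}x$; once the terminal commutation $x''y^{(n)}=y^{(n)}x''$ places $x''$ and $y^{(n)}$, hence $x$, $y$, $z$ and (via (i)) $t$, in $w^{*}$ for a common root $w$, the length equalities $|y|=|z|$ and $|x|=|t|$ established at the outset immediately yield $y=z$ and $x=t$, so the descent does close uniformly. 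Second, the descent can terminate with $y^{(n)}=\varepsilon$ (exactly when $y$ is a power of $x$, i.e.\ when $|y^{(n-1)}|=|x|$); your dichotomy does not cover this, and Lemma~\ref{LLyndon} cannot be invoked there since it requires nonempty words, but the conclusion is immediate in that case: $y=x^{n}=z$ and cancelling in (i) gives $t=x$. With these two remarks added, the argument is a complete and correct replacement for the external citation.
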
 

\begin{lemma}[Krieger~\cite{Kr09}] \label{L13}
Let $x, y, z ,t \in \A^+$. Assume the following equalities hold:
\begin{enumerate}[(i)]
\item $xy = yt$;
\item $tz = zx$.
\end{enumerate}
Then there exists $u \in \A^+, v \in \A^*$ and integers $i \geq 1$ and $j, m \geq 0$ such that $x = (uv)^i$, $t = (vu)^i$, $y = (uv)^ju$, $z = (vu)^mv$. If in addition $\vert y \vert = \vert z \vert$, then either $v = \epsilon$ and $m = j+1$, or $\vert u \vert = \vert v \vert$ and $m = j$.
\end{lemma}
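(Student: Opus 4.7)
The plan is to combine the two hypotheses into a single commutation relation, apply Lemma~\ref{LLyndon} to extract a common primitive root, and then read off the required decomposition directly. First I would multiply the first equation on the right by $z$ and use the second: $xyz = (xy)z = (yt)z = y(tz) = y(zx) = yzx$, so $x$ commutes with $yz$. By Lemma~\ref{LLyndon}, there is a primitive word $w \in \A^+$ and integers $a, b \geq 1$ with $x = w^a$ and $yz = w^b$.

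Since $yz$ is a power of $w$, the prefix $y$ has the shape $y = w^{j}\pi$, where $j \geq 0$ and $\pi$ is a (possibly empty) proper prefix of $w$; writing $w = \pi\sigma$, this forces $z = \sigma w^{m}$ for some $m \geq 0$. I would then split on whether $\pi = \varepsilon$. If $\pi \in \A^+$, set $u = \pi$, $v = \sigma$ (both in $\A^+$) and $i = a$: the relation $uv = w$ gives $x = (uv)^i$, the identity $w^a \pi = \pi(\sigma\pi)^a$ rewrites $xy = yt$ as $t = (\sigma\pi)^a = (vu)^i$, and the formulas $y = (uv)^j u$ and $z = v(uv)^m = (vu)^m v$ follow immediately. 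If $\pi = \varepsilon$, then $y$ and $z$ are already powers of $w$ and $xy = yt$ forces $t = w^a = x$; setting $u = w$, $v = \varepsilon$, $i = a$ (with the re-indexing $j \mapsto j-1$ in the lemma's notation) matches the required form.

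For the length addendum, a direct calculation in each case gives the claimed dichotomy: in the first case $|y| = j(|u|+|v|) + |u|$ and $|z| = m(|u|+|v|) + |v|$ with $|u|, |v| \geq 1$, so the equation $(j-m)(|u|+|v|) = |v|-|u|$ has the unique integer solution $j = m$, $|u| = |v|$, giving the second alternative; in the second case $v = \varepsilon$ and $|y| = |z|$ reads $m = j+1$ after the re-indexing, giving the first alternative. The main technical obstacle I foresee is the careful verification that $t = (vu)^i$ in the first case, which rests on the identity $w^a \pi = \pi(\sigma\pi)^a$ (a direct consequence of $w = \pi\sigma$) and on tracking how the conjugation implicit in $xy = yt$ transfers to $t$; the rest is either bookkeeping or a direct application of Lemma~\ref{LLyndon}.
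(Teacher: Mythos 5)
Your proof is correct, but note that the paper does not prove this statement at all: it is quoted from Krieger's paper \cite{Kr09} and used as a black box, so there is no in-paper argument to compare against. Your reduction is sound: multiplying $xy=yt$ on the right by $z$ and substituting $tz=zx$ gives $x(yz)=(yz)x$, Lemma~\ref{LLyndon} yields a common root $w$ with $x=w^a$ and $yz=w^b$, and the splitting $y=w^j\pi$, $z=\sigma w^{b-j-1}$ with $w=\pi\sigma$ produces exactly the claimed shapes; the key identity $w^a\pi=\pi(\sigma\pi)^a$ correctly gives $t=\pi^{-1}w^a\pi=(vu)^a$ with the \emph{same} exponent $i=a$, which is the one point where a careless argument could go wrong. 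The degenerate case $\pi=\varepsilon$ is handled properly (there $j_{\mathrm{old}}\geq 1$ since $y\in\A^+$, so the re-indexing is legitimate), and the length dichotomy follows from the observation that $(j-m)(|u|+|v|)=|v|-|u|$ forces $j=m$ and $|u|=|v|$ when $|u|,|v|\geq 1$. This is essentially the standard proof one finds in the literature (Krieger's own argument runs along the same lines, solving the conjugacy equation $xy=yt$ and then using $tz=zx$ to pin down $z$); your variant of folding both hypotheses into a single commutation before decomposing is a clean way to organize it. One cosmetic remark: Lemma~\ref{LLyndon} as stated does not assert primitivity of the common root, but you do not actually need $w$ primitive anywhere, so you could drop that word.
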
 

\begin{lemma}
Let $G = (\A, \varphi, a)$ be a circular D0L-system with a binary $k$-uniform morphism $\varphi$ and $GO_G$ its graph of overhangs. Then $GO_G$ does not contain two loops on a common vertex (see Figure~\ref{fig:prohibited_structures} a)) as its subgraph. 
\end{lemma}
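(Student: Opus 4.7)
The plan is to assume for contradiction that $GO_G$ contains two distinct loops at some common vertex $s_1$, and to show that this forces $\varphi(a)=\varphi(b)$, which contradicts the assumed circularity by Lemma~\ref{LNoncircularMorfism}(i).

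By Corollary~\ref{col:components}, both loops must carry labels of the form $(\varphi(x_i),\varphi(y_i),\ell)$ with the \emph{same} overlap length $\ell$, because the two endpoints are $s_1$ and a vertex within a component has a fixed length $k-\ell$. There are only four possible edge labels, so six unordered pairs to examine; up to the $a\leftrightarrow b$ symmetry these collapse to three genuine configurations: two ``homogeneous'' loops $(\varphi(a),\varphi(a),\ell)$ and $(\varphi(b),\varphi(b),\ell)$; a homogeneous loop together with a ``mixed'' one, for instance $(\varphi(a),\varphi(a),\ell)$ paired with $(\varphi(a),\varphi(b),\ell)$; and the two mixed loops $(\varphi(a),\varphi(b),\ell)$ and $(\varphi(b),\varphi(a),\ell)$.

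The key observation I would use is that, by unfolding Definition~\ref{dfn:overgang}, a loop at $s_1$ labelled $(\varphi(x),\varphi(y),\ell)$ is equivalent to the pair of factorisations $\varphi(x)=s_1 c$ and $\varphi(y)=c s_1$, where the common factor $c$ has length $\ell$. In the two-homogeneous case this gives $\varphi(a)=s_1 c_a=c_a s_1$ and $\varphi(b)=s_1 c_b=c_b s_1$, so $s_1$ commutes both with $c_a$ and with $c_b$; applying the equivalence (i)$\Leftrightarrow$(iii) of Lemma~\ref{LLyndon} twice expresses $\varphi(a)$, $\varphi(b)$ and $s_1$ as powers of a single word, and the length condition $|\varphi(a)|=|\varphi(b)|=k$ then forces $\varphi(a)=\varphi(b)$. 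In the homogeneous-plus-mixed case, comparing the two resulting expressions for $\varphi(a)$ (or for $\varphi(b)$) immediately identifies the two common factors and collapses $\varphi(a)$ and $\varphi(b)$ into the same word. In the two-mixed case I obtain $s_1 c_1 = c_2 s_1$ and $c_1 s_1 = s_1 c_2$, a system on which Krieger's Lemma~\ref{L12} applies directly (with $x=s_1$, $y=c_1$, $z=c_2$, $t=s_1$) to yield $c_1=c_2$; substituting back turns the equations into $s_1 c_1 = c_1 s_1$, reducing to the first (commuting) case.

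The main difficulty is purely organisational: making sure the case analysis is exhaustive and that each word equation is matched with the correct auxiliary lemma. No further substantive tool beyond Lemmas~\ref{LLyndon} and~\ref{L12} seems necessary, since both were crafted precisely for equations of the shapes $xy=yx$ and $xy=zt$ with $yx=xz$ that appear here.
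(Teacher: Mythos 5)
Your proposal is correct and follows essentially the same route as the paper: the same translation of a loop at $s_1$ labelled $(\varphi(x),\varphi(y),\ell)$ into the factorisations $\varphi(x)=s_1c$, $\varphi(y)=cs_1$, the same case analysis up to symmetry, and the same use of Lemmas~\ref{LLyndon} and~\ref{L12} to force $\varphi(a)=\varphi(b)$. The only cosmetic difference is in the two-homogeneous-loops case, where the paper first invokes Krieger's Lemma~\ref{L13} with $x=t=s$ before applying Lemma~\ref{LLyndon}, while you apply Lemma~\ref{LLyndon} directly to the two commutation relations $s_1c_a=c_as_1$ and $s_1c_b=c_bs_1$; both work.
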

\begin{proof}
Assume that there is a component containing a vertex $s$ such that $|s| = k - \ell$ and there are two loops starting and ending in $s$. We have six possible pairs of labels for the two edges of the loops (see Figure~\ref{fig:components_with_1_vertex}). We consider only the following three cases since the others can be handled analogously.

First assume that the two loops on $s$ are labelled with $(\varphi(a), \varphi(a), \ell)$ and $(\varphi(b), \varphi(a), \ell)$. We must have $\varphi(a) = sy = ys = zs$ and $\varphi(b) = sz$. It follows that $z = y$ and therefore $\varphi(a) = \varphi(b)$ and $G$ is not circular.

Now assume the two labels read $(\varphi(a), \varphi(a), \ell)$ and $(\varphi(b), \varphi(b), \ell)$. We get $\varphi(a) = sy = ys$ and $\varphi(b) = sz = zs$. By Lemma~\ref{L13} applied to $x = t = s$ we must have $s = (uv)^i = (vu)^i$, $y = (uv)^ju$ and $z = (vu)^mv$ with integers $i \geq 1, j, m \geq 0$. Since $uv = vu$, Lemma~\ref{LLyndon} (iii) says that both $u$ and $v$ are powers of the same word and hence we have again  $\varphi(a) = \varphi(b)$.

Finally, assume the labels are $(\varphi(a), \varphi(b), \ell)$ and $(\varphi(b), \varphi(a), \ell)$. It holds that $\varphi(a) = sy = zs$ and $\varphi(b) = sz = ys$. By Lemma~\ref{L12} applied for $x = t = s$ we have $y = z$ and so again $\varphi(a) = \varphi(b)$.
\end{proof}

\begin{lemma} \label{Lprohibitedcycle}
Let $G = (\A, \varphi, a)$ be a circular D0L-system with a binary $k$-uniform morphism $\varphi$ and $GO_G$ its graph of overhangs. Then $GO_G$ does not contain a cycle on two vertices with a loop on one of them (see Figure~\ref{fig:prohibited_structures} b)) as its subgraph. 
\end{lemma}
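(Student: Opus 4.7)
The plan is to mimic the proof of the previous lemma: assume the forbidden subgraph occurs, translate each of its three edges into a word equation for $\varphi(a)$ and $\varphi(b)$, and then use Lemmas~\ref{LLyndon}, \ref{L12} and~\ref{L13} to force $\varphi$ into one of the non-circular shapes listed in Lemma~\ref{LNoncircularMorfism}, contradicting circularity of $G$.

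First I would fix coordinates. Let $s_1$ be the vertex carrying the loop and $s_1 \to s_2 \to s_1$ the two-cycle. By Corollary~\ref{col:components} the three edge labels share a common overlap length $\ell$, both $s_1$ and $s_2$ have length $k-\ell$, and each label has the form $(\varphi(x),\varphi(y),\ell)$ with $x,y \in \{a,b\}$. Unpacking Definition~\ref{dfn:overgang}, the loop on $s_1$ and the two edges of the cycle yield
\begin{align*}
\varphi(\alpha) &= s_1 p, & \varphi(\alpha') &= p\,s_1, \\
\varphi(\beta) &= s_1 q, & \varphi(\beta') &= q\,s_2, \\
\varphi(\gamma) &= s_2 r, & \varphi(\gamma') &= r\,s_1,
\end{align*}
for some words $p,q,r$ of length $\ell$ and letters $\alpha,\alpha',\beta,\beta',\gamma,\gamma' \in \{a,b\}$, together with the standing constraint $s_1 \neq s_2$.

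The proof then splits according to whether the loop involves a single letter or both. If $\alpha=\alpha'$, Lemma~\ref{LLyndon} applied to $s_1 p = p\,s_1$ gives that $s_1$ and $p$ are powers of a common word $z$, so $\varphi(\alpha)$ is a power of $z$. Feeding $s_1=z^i$ into the cycle equations and using that $\varphi(\beta)$ starts with $s_1$ while $\varphi(\gamma')$ ends with $s_1$, I would derive (depending on whether $z$ is a single letter or a true two-letter word) either $\varphi(a) = \varphi(b)$, or $\varphi(a) = a^k$ (respectively $\varphi(b)=b^k$), or $\varphi(a) = b^k$ and $\varphi(b) = a^k$, or the alternating $(ab)^m/(ba)^m$ pattern, each forbidden by Lemma~\ref{LNoncircularMorfism}. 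If instead $\{\alpha,\alpha'\}=\{a,b\}$, then the loop equations say that $\varphi(a)$ and $\varphi(b)$ are conjugate through $s_1$ and $p$. For each label assignment of the two cycle edges I would combine this conjugacy with a pair of cycle equations, producing a system of the shape $xy=zt,\ yx=xz$ or $xy=yt,\ tz=zx$; applying Lemma~\ref{L12} or Lemma~\ref{L13} then collapses the system to $\varphi(a)=\varphi(b)$, or to $s_1=s_2$ (impossible, since the two vertices are distinct), or once more to the alternating pattern of clauses~(iv)-(v) of Lemma~\ref{LNoncircularMorfism}.

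The main obstacle will be bookkeeping rather than conceptual. A priori there are $2^6=64$ labellings of the three edges; I would prune this using the $a\leftrightarrow b$ symmetry of the statement and the symmetry that reverses the orientation of the two-cycle (swapping the roles of the $s_1\to s_2$ and $s_2\to s_1$ edges), and then group sub-cases whose word equations coincide up to exchanging left and right sides. The combinatorial input in every surviving sub-case is one of the three string lemmas cited above, so once the enumeration is organised each sub-case is a short direct computation ending in a contradiction with Lemma~\ref{LNoncircularMorfism} or with $s_1\neq s_2$.
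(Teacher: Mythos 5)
Your proposal is correct and follows essentially the same route as the paper's proof: translate the three edges into conjugacy-type word equations, use the component structure to cut the label combinations down to two essentially distinct configurations, and apply Lemmas~\ref{LLyndon}, \ref{L12} and~\ref{L13} to force either $\varphi(a)=\varphi(b)$ (contradicting Lemma~\ref{LNoncircularMorfism}) or $s_1=s_2$ (contradicting the shape of the subgraph). The only differences are organizational --- you split on the loop label first where the paper splits on the cycle labels and brings in the loop at the end, and in the $\alpha=\alpha'$ branch the contradiction you actually reach is $s_1=s_2$ rather than one of the non-circular morphism shapes --- but these do not affect the validity of the argument.
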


\begin{proof}
The proof is quite similar to the previous one: we again assume that the structure can be found in the graph on vertices $s_1$ and $s_2$. The words $s_1$ and $s_2$ must be of the same length which is denoted by $k - \ell$. As follows from the Figure~\ref{fig:components_with_2_vertices} f) and g), the labels of the cycle are either $(\varphi(a), \varphi(a), \ell)$ and  $(\varphi(b), \varphi(b), \ell)$ or $(\varphi(a), \varphi(b), \ell)$ and  $(\varphi(b), \varphi(a), \ell)$.

Assume the former case is true, the latter one can be handled analogously. We get $\varphi(a) = s_1y = ys_2$, $\varphi(b) = s_2z = zs_1$ and $|y| = |z|$. Due to Lemma~\ref{L13} we can write $\varphi(a) = (uv)^{i+j}u$, $\varphi(b) = (vu)^{i+m}v$, $s_1 = (uv)^i$ and $s_2 = (vu)^i$ for some $u \in \A^+, v \in \A^*$ and integers $i \geq 1$ and $j, m \geq 0$. If $v = \epsilon$ and $m = j+1$, then $\varphi(a) = \varphi(b)$ which contradicts circularity, therefore we must have $\varphi(a) = (uv)^{i+j}u$, $\varphi(b) = (vu)^{i+j}v$ with $u$ and  $v$ non-empty and of the same length. 

Assume that the loop is on the vertex $s_1$ (the other case is again analogous), the label of the loop must read $(\varphi(a), \varphi(b), \ell)$. Such an overhang can exist only if $uv = vu$. This is again a contradiction since $\varphi(a) = \varphi(b)$.    
\end{proof}

\begin{lemma}
Let $G = (\A, \varphi, a)$ be a circular D0L-system with a binary $k$-uniform morphism $\varphi$ and $GO_G$ its graph of overhangs. Then $GO_G$ does not contain two vertices with a loop that are also connected with an edge (see Figure~\ref{fig:prohibited_structures} c)) as its subgraph. 
\end{lemma}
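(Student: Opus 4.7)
The plan is to mirror the strategy of the two preceding lemmas: assume toward contradiction that the forbidden subgraph occurs in $GO_G$ on vertices $s_1$ and $s_2$, each carrying a loop and connected by an edge, with $|s_1| = |s_2| = k - \ell$. Write $\alpha = \varphi(a)$ and $\beta = \varphi(b)$. The goal is to extract word equations from the three structural edges and invoke Lemmas~\ref{LLyndon},~\ref{L12} and~\ref{L13} to force one of the non-circular patterns listed in Lemma~\ref{LNoncircularMorfism}, contradicting circularity.

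First I would cut down the case analysis via symmetries. By the previous lemma each vertex carries at most one loop, so each loop has exactly one of the four labels $(\alpha, \alpha, \ell)$, $(\alpha, \beta, \ell)$, $(\beta, \alpha, \ell)$ or $(\beta, \beta, \ell)$, and the connecting edge has one of the same four labels. The $a \leftrightarrow b$ relabelling and the option of reversing the orientation of the connecting edge (which swaps the roles of $s_1$ and $s_2$) collapse the many configurations into a short list of canonical cases.

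Next I would translate each configuration into word equations. A loop at $s_i$ with label $(\alpha,\alpha,\ell)$ gives $\alpha = s_i y_i = y_i s_i$ with $|y_i| = \ell$; a loop with label $(\alpha,\beta,\ell)$ gives $\alpha = s_i y_i$ and $\beta = y_i s_i$; and so on. The connecting edge $s_1 \to s_2$ labelled $(\varphi(c),\varphi(d),\ell)$ contributes $\varphi(c) = s_1 x$ and $\varphi(d) = x s_2$ with $|x| = \ell$. I would then dispose of the cases in two groups. When both loops are same-letter (labels of the form $(\varphi(c),\varphi(c),\ell)$), Lemma~\ref{LLyndon} at each loop makes $s_i$ and $y_i$ powers of a common primitive root $u_i$; the edge equation then forces $u_1$ and $u_2$ to share a primitive root, which combined with $|\alpha| = |\beta| = k$ yields $\alpha = \beta$. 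When at least one loop is mixed-letter, Lemma~\ref{L13} (or Lemma~\ref{L12}) produces a decomposition of the shape $\alpha = (uv)^i \cdots$, $\beta = (vu)^i \cdots$; combining it with the equation coming from the remaining loop and the edge forces $uv = vu$, so by Lemma~\ref{LLyndon} again $\alpha = \beta$.

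The main obstacle is the bookkeeping. The word-equation toolkit is the same as in the preceding two lemmas, but the configuration now has three structural edges (two loops plus a connector) instead of two, so more combinations must be checked. The subtlest cases are those with at least one mixed-letter loop: here Lemma~\ref{L13} applies only after correctly matching the mixed equation with a compatible fragment of the edge equation, and one must be careful about which of $s_1, s_2$ is the source and which is the target. The length constraint $|\alpha|=|\beta|=k$ is what closes the argument in every case, essentially as in the final paragraph of the proof of Lemma~\ref{Lprohibitedcycle}.
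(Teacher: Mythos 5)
Your proposal follows essentially the same route as the paper's proof: translate the two loops and the connecting edge into word equations, reduce to the case where both loops are same-letter versus both mixed-letter, and apply Lemma~\ref{LLyndon} (respectively Lemma~\ref{L12}/\ref{L13}) to force $\varphi(a)=\varphi(b)$, contradicting circularity. The only detail worth tightening is the case reduction: it is not the $a\leftrightarrow b$ and orientation symmetries alone but the component structure (each vertex is a fixed prefix or suffix of $\varphi(a)$ or $\varphi(b)$, cf.\ Corollary~\ref{col:components}) that forces the two loop labels to be complementary and the edge label to be determined, which is exactly how the paper cuts the configurations down to the two cases you analyse.
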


\begin{proof}
	This structure can appear in $GO_G$ only as a subgraph of the graphs f) and g) in Figure~\ref{fig:components_with_2_vertices}.
	
	Assume the loops are on the vertices $s_1$ with label $(\varphi(a), \varphi(a), \ell)$ and on $s_2$ with label $(\varphi(b), \varphi(b), \ell)$ and the edge from $s_1$ to $s_2$ is labelled by $(\varphi(a), \varphi(b), \ell)$ (the other case can be obtained just by exchanging the letters $a$ and $b$). It must hold that $\varphi(a) = s_1x = xs_1 = s_1y$ and $\varphi(b) = ys_2 = s_2z = zs_2$. It follows that $x = y = z$ and so $\varphi(a) = \varphi(b)$ by Lemma~\ref{LLyndon} $(iii)$. Hence $G$ is not injective and so non-circular.
	
	Assume now the loops are on the vertex $s_1$ with label $(\varphi(a), \varphi(b), \ell)$ and on the vertex $s_2$ with label $(\varphi(b), \varphi(a), \ell)$ and the edge from $s_1$ to $s_2$ is labelled with $(\varphi(a), \varphi(a), \ell)$ (the other case can be obtained just by exchanging the letters $a$ and $b$). It holds that $\varphi(a) = s_1x = s_1y = ys_2 = zs_2$ and $\varphi(b) = xs_1 = s_2z$. It follows that $x = y = z$ and by Lemma~\ref{L12} applied for x = t we have $\varphi(a) = \varphi(b)$.
\end{proof}

\subsection{Word-longest walks}

If we put all the previous results together, we can say that any walk in the graph of overhangs of a circular D0L-system $G$ with a binary $k$-uniform morphism is of one of the shapes depicted in Figure~\ref{fig:possible_walks}.
\begin{figure}[!ht]
\begin{center}
\includegraphics{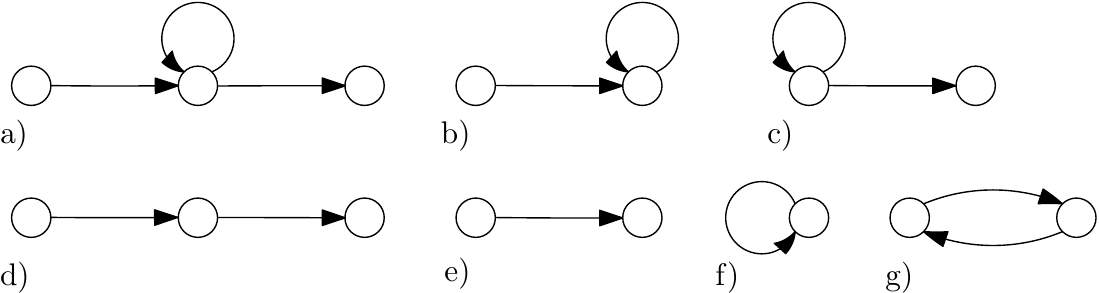}
\caption{All possible walks in graphs of overhangs of circular D0L-systems with a binary $k$-uniform morphism.} \label{fig:possible_walks}
\end{center}
\end{figure}
As we are looking for the word-longest $G$-admissible walk, it suffices to focus on walks in graphs a) and g) in Figure~\ref{fig:possible_walks}. However, the word-length of a $G$-admissible walk depends on how many times such a walk goes along the edge of the loop (in case a)) or along the both edges of the cycle (case g)). Denote the maximal number of those transitions along the edges by $R_1$ and $R_2$, respectively. We recall that these numbers must be finite for circular D0L-systems.

Denote the three vertices in graph a) in Figure~\ref{fig:possible_walks} by $s_1, s_2, s_3$ ($s_2$ is the vertex with the loop) and the common factors of the labels of the respective edges by $x_1, x_2, x_3$. We set 
$R_1$ to the maximum $R \in \N$ such that the walk
$$
\underbrace{s_2 \rightarrow s_2 \rightarrow \cdots \rightarrow s_2}_{R \text{ arrows}}
$$
is $G$-admissible for some circular D0L-system $G$ with a binary $k$-uniform morphism. The word-length of any $G$-admissible walk in any subgraph of graph a) in Figure~\ref{fig:possible_walks} is bounded by $|x_1(s_2x_2)^{R_1}s_2x_3| = |x_1|+ R_1k + k \leq R_1k + 2k - 1$.

Similarly, we denote the vertices of the cycle in graph g) in Figure~\ref{fig:possible_walks} by $s_1$ and $s_2$ and the common factors of the labels by $x_1$ and $x_2$. We set 
$R_2$ to the maximum $R \in \N$ such that the walk
$$
\underbrace{s_1 \rightarrow s_2 \rightarrow \cdots \rightarrow s_2 \rightarrow s_1}_{2R \text{ arrows}}
$$
is $G$-admissible for some circular D0L-system $G$ with a binary $k$-uniform morphism.  The word-length of any $G$-admissible walk in any subgraph of graph g) in Figure~\ref{fig:possible_walks} is bounded by $|(s_1)^{-1}(s_1x_1s_2x_2)^{R_2}s_1x_1| = 2kR_2 + |x_1| \leq 2kR_2 + k - 1$ (we have considered the walk that goes through the cycle $(R_2 + 1/2)$ times).

By Lemma~\ref{lem:longest-walk}, the minimum synchronizing delay $Z_\text{min}$ is equal to or less than the number $\max\{R_1k + 4k - 4, 2kR_2 + 3k - 4\}$. It remains to find an upper bound on $R_1$ and $R_2$. We start with $R_1$: the label of the loop in graph a) in Figure~\ref{fig:possible_walks} is one of these four overhangs: $(\varphi(a), \varphi(a), \ell)$, $(\varphi(a), \varphi(b), \ell)$,  $(\varphi(b), \varphi(b), \ell)$ and $(\varphi(b), \varphi(a), \ell)$. As the last two cases are analogous to the first or the second one, we focus only on labels $(\varphi(a), \varphi(a), \ell)$ and $(\varphi(a), \varphi(b), \ell)$: in the former case $R_1 = R_a$ and in the latter case $R_1 = \min\{R_a, R_b\}$, where $R_a = \max\{\ell \in \N : a^\ell \in S(L(G))\}$ and $R_b = \max\{\ell \in \N : b^\ell \in S(L(G))\}$.
\begin{lemma}\label{lem:loop1}
Let $G = \{\A, \varphi, a\}$ be a circular D0L-system with a binary $k$-uniform morphism $\varphi$ and $GO_G$ its graph of overhangs containing a vertex $s$ with a loop labelled with $(\varphi(a), \varphi(a), \ell)$. Then the following holds:
\begin{enumerate}
\item $R_a \leq k-1$ if $k$ is a prime number,
\item $R_a \leq k(\frac{k}{d}-1) + 1$ otherwise,
\end{enumerate}
where number $d$ is the least divisor of $k$ greater than $1$. These bounds are attained for some D0L-system.
\end{lemma}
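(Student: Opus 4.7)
My plan is to translate the loop into a structural equation for $\varphi(a)$, split the analysis on whether $k$ is prime and on whether $\varphi(b)$ equals $a^k$, and in each branch bound the longest $a$-run by tracking how runs propagate across consecutive $\varphi$-images. The opening step reads off from the loop label $(\varphi(a),\varphi(a),\ell)$ the equality $\varphi(a)=sy=ys$ with $|s|+|y|=k$; Lemma~\ref{LLyndon} then gives a primitive word $w$ and an integer $n\geq 2$ with $|w|\cdot n=k$ and $\varphi(a)=w^n$, while Lemma~\ref{LNoncircularMorfism}(ii) rules out $w=a$.

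In the prime case, $n\mid k$ and $n\geq 2$ force $n=k$, hence $w=b$ and $\varphi(a)=b^k$; since Lemma~\ref{LNoncircularMorfism}(iii) then forbids $\varphi(b)=a^k$, no $\varphi$-image equals $a^k$. The standard consequence is that any run $a^R$ inside some $\varphi^m(a)$ spans at most two consecutive $\varphi$-images of letters of $\varphi^{m-1}(a)$ (a middle image fully inside the run would have to equal $a^k$). Writing $p_a(x)$, $q_a(x)$ for the $a$-suffix and $a$-prefix of $\varphi(x)$, the bound reads $R_a\leq\max_{x,y}\bigl\{p_a(x)+q_a(y),\,r_a(\varphi(x))\bigr\}$; the equalities $p_a(a)=q_a(a)=0$ together with $p_a(b)+q_a(b)\leq k-1$ and $r_a(\varphi(b))\leq k-1$ (since $\varphi(b)$ contains a $b$) deliver $R_a\leq k-1$.

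In the non-prime case the same spanning argument gives $R_a\leq k+|w|-2$ whenever $\varphi(b)\neq a^k$, comfortably below the claimed bound. The delicate branch is $\varphi(b)=a^k$: a run of $a$'s in $\varphi^m(a)$ may then bridge an entire maximal block of $b$'s in $\varphi^{m-1}(a)$, since each middle $b$ contributes a full $a^k$. A boundary analysis shows $R_a\leq p_a(w)+R_b\cdot k+q_a(w)$, where $R_b$ is the longest $b$-run in $S(L(G))$. Because $\varphi(b)=a^k$ contains no $b$'s, every $b$ lives inside some $\varphi(a)=w^n$ block, and a direct computation yields $R_b\leq\max\bigl(r_b(w),\,p_b(w)+q_b(w)\bigr)\leq|w|-1$. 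A case analysis on whether $w$ begins and ends with $a$ or $b$ shows that $p_a(w)+q_a(w)+R_b\cdot k$ is maximised by $w=b^{|w|-1}a$ (or its mirror $ab^{|w|-1}$), giving $R_a\leq k(|w|-1)+1$; taking the largest admissible $|w|$, namely $k/d$, completes the bound.

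For attainment I would exhibit explicit witnesses: in the prime case take $\varphi(a)=b^k$ with $\varphi(b)=ba^{k-1}$, so that $\varphi^2(a)=(ba^{k-1})^k$ already contains $a^{k-1}$ at every interior junction; in the non-prime case take $\varphi(a)=(b^{k/d-1}a)^d$ with $\varphi(b)=a^k$, so that the maximal $b$-block of length $k/d-1$ inside $\varphi(a)$, flanked by two $a$-letters, produces in $\varphi^2(a)$ the run $a^{k(k/d-1)+1}$. Circularity in each case is verified directly against Lemma~\ref{LNoncircularMorfism}. The main technical obstacle is the branch $\varphi(b)=a^k$: one must bound $R_b$ tightly, argue that the $a$-run extends by exactly $p_a(w)$ on one side of the maximal $b$-block and $q_a(w)$ on the other (otherwise the $b$-block would exceed $R_b$), and then enumerate the shapes of the primitive binary word $w$ to identify the extremal configuration.
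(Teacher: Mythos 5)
Your proof is correct and follows essentially the same route as the paper's: the loop forces $\varphi(a)=xy=yx$, Lyndon--Sch\"{u}tzenberger gives $\varphi(a)=w^n$, the case split driven by Lemma~\ref{LNoncircularMorfism} isolates the critical branch $\varphi(b)=a^k$ where $R_a\leq kR_b+p+q$ with $R_b\leq \frac{k}{d}-1$, and you exhibit the same extremal morphisms $\varphi(a)=(ab^{k/d-1})^d,\ \varphi(b)=a^k$ and $\varphi(a)=b^k,\ \varphi(b)=ba^{k-1}$. The only notable (cosmetic) difference is that you organize the cases by primality of $k$ first rather than by the form of $\varphi(a)$ and $\varphi(b)$ as the paper does.
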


\begin{proof}
We find a morphism $\varphi$ so that the value of $R_a$ for the resulting D0L-system is maximum. We have that $\varphi(a) = xy = yx$ and $\varphi(b)$ is arbitrary. By Lemma \ref{LLyndon}, there exists a word $z$ such that $\varphi(a) = z^{m}$ for some $z \in \A^+$ and integer $m \geq 2$. Since $G$ is circular, one of the following is true by Lemma~\ref{LNoncircularMorfism}:
\begin{enumerate}[(i)]
\item $\varphi(a) = z^m$, $\varphi(b) = a^k$, and $|z|_a > 0$,  $|z|_b > 0$. This is possible only if $k$ is not prime.
\item $\varphi(a) = z^m$, $\varphi(b) = w$, where $|z|_a > 0$,  $|z|_b > 0$, $|w|_a >0$ and $|w|_b >0$. This is possible only if $k$ is not prime, too.
\item $\varphi(a) = b^k$, $\varphi(b) = w$, where $|w|_a >0$ and $|w|_b >0$.
\end{enumerate}

We analyse these three cases separately.

Case $(i)$: Let $p, q$ denote the maximum numbers such that $a^p$ is a prefix of $\varphi(a)$ and $a^q$ is suffix of $\varphi(a)$. We have that $R_a = R_bk + p + q$.  Since $p+q < |z| \leq \frac{k}{2}$, the maximum value of $R_a$ is attained if we maximize the value of $R_b$ (regardless of the value of $p+q$). The letter $b$ is present only in $\varphi(a)$, thus $R_b$ is given by the maximal power of $b$ in the word $zz$ (note that $z$ contains the letter $a$). So $R_b$ is maximum possible if $m = d$, where $d$ is the least divisor of $k$ greater than $1$. We get $\vert z \vert = \frac{k}{d}$ and so the maximum possible value of $R_b$ is $\frac{k}{d} - 1$. It follows that $p+q \leq 1$ and $R_a \leq k\left(\frac{k}{d}-1\right) + 1$.     
This bound is attained for $\varphi(a) = (ab^{\frac{k}{d}-1})^d$, $\varphi(b) = a^k$.

Case $(ii)$: We have $|z|_a \leq \frac{k}{d}-1$ and $|\varphi(b)|_a \leq k-1$, where $d$ is again the least divisor of $k$ greater than $1$. The value of $R_a$ is equal to the maximum value of $\ell$ such that $a^\ell$ is a factor of $\varphi(a)\varphi(a)$, $\varphi(a)\varphi(b)$, $\varphi(b)\varphi(a)$ or $\varphi(b)\varphi(b)$. It is easy to see that $R_a \leq k-1 + \frac{k}{d} - 1 = \frac{k(d+1)}{d} -2$.

Case $(iii)$: The letter $a$ is present only in the word $\varphi(b) = w$, thus the maximum power of $a$ in $S(L(G))$ is equal to the maximum power of $a$ that appears in the word $ww$. Since $|w|_b >0$, we get $R_a \leq k-1$. This bound is attained for $\varphi(a) = b^k$, $\varphi(b) = ba^{k-1}$.

If $k$ is a prime number, the proof is finished with $R_a \leq k - 1$. Let $k$ be not prime and $d$ its least divisor greater than one. It is an easy exercise to show that
$$
	k - 1 < k \leq \frac{k(d+1)}{d} - 2 < k \left(\frac{k}{d} - 1\right) + 1,
$$
which concludes the proof.
\end{proof}

\begin{lemma}\label{lem:loop2}
Let $G = \{\A, \varphi, a\}$ be a circular D0L-system with a binary $k$-uniform morphism $\varphi$ and $GO_G$ its graph of overhangs containing a vertex $s$ with a loop labelled with $(\varphi(a), \varphi(b), \ell)$. Then for $R_1 = \max\{ R_a, R_b \}$ the following holds:
\begin{enumerate}
\item $R_1 \leq k$ if $k$ is even,
\item $R_1 \leq k-1$ if $k$ is odd.
\end{enumerate}
These bounds are attained for some D0L-system.
\end{lemma}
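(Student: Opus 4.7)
The plan is to use the fact that the loop condition $(\varphi(a),\varphi(b),\ell)$ on $s$ is equivalent to the conjugacy relation $\varphi(a)=sx$ and $\varphi(b)=xs$ for some word $x$ of length $\ell$ (with $|s|=k-\ell$), so that $\varphi(b)$ is just the cyclic rotation of $\varphi(a)$ by $\ell$ positions. In particular $\varphi(a)$ and $\varphi(b)$ have identical letter-frequency vectors, and by Lemma~\ref{LNoncircularMorfism} circularity forbids $\varphi(a),\varphi(b)\in\{a^k,b^k\}$, so each image contains at least one $a$ and at least one $b$.

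The next step is to reduce bounding $R_a$ and $R_b$ simultaneously to bounding a small finite number of boundary quantities. Because each $\varphi$-image contains both letters, its maximal internal $a$-run (and $b$-run) has length at most $k-1$; moreover, a run in $\varphi^n(a)$ cannot span three consecutive $\varphi$-images, since the middle image would have to be a constant word and that is excluded. Hence any run of $a$'s in $S(L(G))$ of length exceeding $k-1$ must equal $q_c+p_{c'}$ for some pair $cc'\in S(L(G))$, where $q_c$ is the trailing $a$-count of $\varphi(c)$ and $p_{c'}$ the leading $a$-count of $\varphi(c')$; the analogous statement holds for $b$-runs. So it suffices to bound the eight quantities $q_c+p_{c'}$ (four for $a$-runs, four for $b$-runs) in terms of $k$ and $\ell$, and since $R_1=\max\{R_a,R_b\}$ we must control all eight, not merely four of them.

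I would then split into cases on the shapes of $s$ and $x$. If neither of $s,x$ is a single-letter power, the presence of a $b$ in $s$ yields $\alpha(s)+\beta(s)\le k-\ell-1$ (with $\alpha,\beta$ the trailing/leading $a$-counts), and the presence of a $b$ in $x$ yields $\alpha(x)+\beta(x)\le\ell-1$; substituting into $\varphi(a)=sx$ and $\varphi(b)=xs$ shows each $q_c+p_{c'}\le k-1$, giving $R_1\le k-1$ regardless of parity. The genuinely hard case is when exactly one of $s,x$ is a single-letter power; by the $a\leftrightarrow b$ and the left-right symmetries of the loop configuration I reduce to $x=a^\ell$ with $s$ containing at least one $b$. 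In that case the four $a$-boundary quantities simplify to expressions in $\alpha(s),\beta(s),\ell$, whose maximum is $q_a+p_b=2\ell+\alpha(s)+\beta(s)$, while the four $b$-boundary quantities simplify to expressions in $\alpha_b(s),\beta_b(s)$ (trailing/leading $b$-counts of $s$), whose maximum is $q_b+p_a=\alpha_b(s)+\beta_b(s)$. Combining $\alpha(s)+\beta(s)\le k-\ell-1$ with the complementary $\alpha_b(s)+\beta_b(s)\le k-\ell$ and the overall head/tail constraint on $s$, the optimisation of the maximum of the two expressions over the admissible $\ell$ gives the announced $R_1\le k$, with the parity bonus $R_1\le k-1$ for odd $k$ coming from the fact that equality in the two simultaneous bounds forces $\ell=k-\ell$, impossible when $k$ is odd.

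For tightness I would provide explicit examples: when $k$ is even, the morphism $\varphi(a)=b^{k/2}a^{k/2}$, $\varphi(b)=a^{k/2}b^{k/2}$ (which is circular by Lemma~\ref{LNoncircularMorfism}) yields the loop on $s$ of length $k/2$ and attains $R_a=R_b=k$ through the boundaries $\varphi(a)\varphi(b)$ and $\varphi(b)\varphi(a)$; for odd $k$, a mild asymmetric variant with $\ell=(k-1)/2$ produces $R_1=k-1$. The main obstacle I anticipate is precisely this last simultaneous extremal analysis: since the naive per-quantity bound $q_a+p_b\le k+\ell-1$ alone is well above $k$, one cannot bound either of $R_a,R_b$ in isolation without losing the claim; the argument must exploit the conjugacy of $\varphi(a)$ and $\varphi(b)$ to trade the available "slack" of $s$ between its $a$-blocks and its $b$-blocks, showing that pushing $q_a+p_b$ close to its maximum forces $q_b+p_a$ to shrink, and then using the parity of $k$ to close the remaining gap.
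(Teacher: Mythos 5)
Your attempt founders on the quantity it tries to bound. You take the statement literally and aim at $R_1=\max\{R_a,R_b\}$, but the ``$\max$'' in the printed lemma is an error in the paper: the definition of $R_1$ given just before Lemma~\ref{lem:loop1} and the paper's own proof both use $R_1=\min\{R_a,R_b\}$ for a loop labelled $(\varphi(a),\varphi(b),\ell)$ --- necessarily so, since a walk going $R$ times around this loop is $G$-admissible only if \emph{both} $\varphi(a)^R$ and $\varphi(b)^R$ are factors, so the number of admissible traversals is governed by the smaller run length. The $\max$-version you set out to prove is in fact false, and a counterexample sits squarely inside your own ``hard case'' ($x$ a power of $a$, $s$ containing a $b$): take $\varphi(a)=aba^{k-2}$, $\varphi(b)=a^{k-1}b$, i.e.\ $s=ab$, $x=a^{k-2}$, $\ell=k-2$. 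This system is circular by Lemma~\ref{LNoncircularMorfism}, it has the required loop on the vertex $s=ab$, and since $ab\in S(L(G))$ the factor $\varphi(a)\varphi(b)=aba^{2k-3}b$ shows $R_a\geq 2k-3$, violating both of the claimed bounds for every $k\geq 3$. (Your case split also omits the case where $s$ and $x$ are \emph{both} single-letter powers, e.g.\ $\varphi(a)=a^{k-\ell}b^{\ell}$, $\varphi(b)=b^{\ell}a^{k-\ell}$, where $\max\{R_a,R_b\}=2\max\{\ell,k-\ell\}$ can reach $2k-2$.) Consequently your final optimisation step cannot ``give the announced $R_1\le k$'': the trade-off you correctly identify --- pushing $q_a+p_b$ up forces $q_b+p_a$ down --- bounds the \emph{minimum} of $R_a$ and $R_b$, never their maximum.

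Once the statement is read with $\min$, the lemma has the short proof the paper gives, and you already have all of its ingredients: $\varphi(a)=sx$ and $\varphi(b)=xs$ are conjugate, so $|\varphi(a)|_a=|\varphi(b)|_a=n_a>0$ and $|\varphi(a)|_b=|\varphi(b)|_b=n_b>0$ (circularity excludes $a^k$ and $b^k$); every image contains both letters, so any run of $a$'s (resp.\ $b$'s) in $S(L(G))$ lies inside a product of two consecutive images, giving $R_a\leq 2n_a$ and $R_b\leq 2n_b$; hence $R_a+R_b\leq 2k$, so $\min\{R_a,R_b\}\leq k$, and for odd $k$ one gets $\min\{R_a,R_b\}\leq 2\min\{n_a,n_b\}\leq k-1$ by parity. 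Tightness is witnessed by $\varphi(a)=a^{\lfloor k/2\rfloor}b^{\lceil k/2\rceil}$, $\varphi(b)=b^{\lceil k/2\rceil}a^{\lfloor k/2\rfloor}$, which is (up to renaming) your even-$k$ example. So your setup --- conjugacy, runs confined to two images, the extremal morphism --- matches the paper's; the elaborate eight-quantity case analysis arises only because you aimed at the wrong, and false, quantity.
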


\begin{proof}
It follows from the assumptions that $\varphi(a) = xy$, $\varphi(b) = yx$, where $x, y$ are non-empty words, and $|\varphi(a)|_a$ = $|\varphi(b)|_a > 0$ and $|\varphi(a)|_b$ = $|\varphi(b)|_b > 0$. 

To find the largest powers of $a$ and $b$ it suffices to go through this four factors of length $2k$: $\varphi(a)\varphi(a)$, $\varphi(a)\varphi(b)$, $\varphi(b)\varphi(a)$ and $\varphi(b)\varphi(b)$. We must have $R_a + R_b \leq 2k$. Since $R_1 = \min \{R_a, R_b\}$, its maximal possible value for even $k$ is attained if $R_a = R_b = k$, if $k$ is odd, the maximum value of $R_1$ is $k - 1$. These maximum values of $R_1$ are attained  for the morphism $\varphi(a) = a^{\lfloor\frac{k}{2}\rfloor}b^{\lceil\frac{k}{2}\rceil}$, $\varphi(b) = b^{\lceil\frac{k}{2}\rceil}a^{\lfloor\frac{k}{2}\rfloor}$.
\end{proof}

It remains to consider the cycle (i.e., case g) in Figure~\ref{fig:possible_walks}). The labels of the cycle are either $(\varphi(a), \varphi(a), \ell)$ and $(\varphi(b), \varphi(b), \ell)$ or $(\varphi(a), \varphi(b), \ell)$ and $(\varphi(b), \varphi(a), \ell)$. We consider only the former case since the latter one is analogous. Let $R_{ab} = \max\{\ell \in \N : (ab)^\ell \in S(L(G))\}$ and $R_{ba} = \max\{\ell \in \N : (ba)^\ell \in S(L(G))\}$, then $R_2 = \min\{R_{ab},R_{ba}\}$.

\begin{lemma}
Let $G = \{\A, \varphi, a\}$ be a circular D0L-system with binary $k$-uniform morphism $\varphi$ and $GO_G$ its graph of overhangs containing a cycle with labels $(\varphi(a), \varphi(a), \ell)$ and $(\varphi(b), \varphi(b), \ell)$. Then it holds that $R_2 \leq \frac{k-2}{2}$.
\end{lemma}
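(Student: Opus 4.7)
The plan is to first extract the algebraic structure forced by the cycle and then bound $R_{ab}$ and $R_{ba}$ using Fine--Wilf together with the non-circular classification of Lemma~\ref{LNoncircularMorfism}. An edge $s_1 \to s_2$ labelled $(\varphi(a),\varphi(a),\ell)$ encodes the conjugacy $s_1 x = x s_2 = \varphi(a)$ with $|x|=\ell$, and the return edge gives $s_2 y = y s_1 = \varphi(b)$. Feeding this pair into Lemma~\ref{L13} and discarding the degenerate branch $v=\varepsilon$ (which would force $\varphi(a)=\varphi(b)$) yields the canonical form
\[
\varphi(a) = (uv)^N u, \qquad \varphi(b) = (vu)^N v, \qquad k = (2N+1)h,
\]
with $|u|=|v|=h$, $N\geq 1$ and $uv\neq vu$ (the inequality comes from $s_1\neq s_2$). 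The case $h=1$ reproduces form (iv) or (v) of Lemma~\ref{LNoncircularMorfism}, so circularity forces $h\geq 2$. A short computation also records the key identities $\varphi(ab)=(uv)^{2N+1}$ and $\varphi(ba)=(vu)^{2N+1}$, both pure powers of a word of length~$2h$.

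Given a hypothetical factor $(ab)^R\in S(L(G))$, I would split on how many $\varphi$-images it meets. If it fits inside a single image $\varphi(c)$, the period $2h$ of $\varphi(c)$ clashes with the period $2$ of $(ab)^R$: by Fine--Wilf, $R\geq h$ would force $\varphi(c)$ to be alternating, which combined with the canonical form lands $\varphi$ in Lemma~\ref{LNoncircularMorfism}; hence $R\leq h-1\leq k/3-1<(k-2)/2$. If $(ab)^R$ straddles a boundary of $\varphi(a)\varphi(b)$ or $\varphi(b)\varphi(a)$, the ambient word is exactly $(uv)^{2N+1}$ or $(vu)^{2N+1}$ and the same argument pins down $uv\in\{(ab)^h,(ba)^h\}$, which again unwinds to forms (iv) or (v) of Lemma~\ref{LNoncircularMorfism}, a contradiction. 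The remaining case is that $(ab)^R$ crosses the boundary of $\varphi(a)\varphi(a)$ or $\varphi(b)\varphi(b)$, whose ambient word is $(uv)^N u^2(vu)^N$; the central block $u^2$ destroys the $2h$-period, so Fine--Wilf is no longer directly applicable.

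To handle this critical case I would measure the alternating factor straddling the junction by the lengths $P$ and $S$ of the longest alternating prefix and suffix of $\varphi(a)$. If $u[0]\neq u[h-1]$, alternation continues across the junction and the alternating factor there has length $P+S$; if $u[0]=u[h-1]$, the junction is itself a break and the length is $\max(P,S)$. Because $\varphi(a)$ is not fully alternating (otherwise Lemma~\ref{LNoncircularMorfism} applies), these alternating prefix and suffix cannot cover $\varphi(a)$, so $P+S\leq k-1$ and hence $R\leq \lfloor(k-1)/2\rfloor$. For $k$ even this already equals $(k-2)/2$ and the lemma follows. The main obstacle is the extra half-unit needed when $k$ is odd: then $h$ must be odd, so any alternating $u$ satisfies $u[0]=u[h-1]$, making the junction automatically a break and reducing the bound to $\max(P,S)$; a short case distinction on whether $u$ is alternating or carries an internal break (and likewise for $v$) produces the sharper inequality $\max(P,S)\leq k-2$. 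Running the symmetric analysis for $(ba)^R$ on $\varphi(b)\varphi(b)$ and taking $R_2=\min\{R_{ab},R_{ba}\}$ then yields $R_2\leq(k-2)/2$ in all cases.
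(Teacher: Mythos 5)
Your strategy is essentially the paper's, fleshed out: derive the canonical form $\varphi(a)=(uv)^{N}u$, $\varphi(b)=(vu)^{N}v$ with $|u|=|v|=h$ and $uv\neq vu$ via Lemma~\ref{L13}, rule out the degenerate branches through Lemma~\ref{LNoncircularMorfism}, and bound alternating runs inside two-image windows. The first two of your three cases are fine. The problem is the step that carries the whole critical case: you assert that because $\varphi(a)$ is not fully alternating, ``these alternating prefix and suffix cannot cover $\varphi(a)$, so $P+S\leq k-1$.'' That implication is false for general non-alternating words: for $w=abba$ the longest alternating prefix and suffix both have length $2$, so $P+S=|w|$. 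The inequality $P+S\leq k-1$ (and the stronger $\max(P,S)\leq k-2$ you need for odd $k$) is equivalent to $\varphi(a)$ containing at least \emph{two} positions $i$ with $\varphi(a)_i=\varphi(a)_{i+1}$, and while this does hold for words of the shape $(uv)^{N}u$, it requires its own argument: an internal break of $u$ recurs in all $N+1\geq 2$ copies; if $u$ and $v$ are both alternating, a parity check on $u_0,u_{h-1},v_0,v_{h-1}$ shows the $u|v$ and $v|u$ junction breaks occur simultaneously, never singly; and the case ``$u$ alternating, $v$ with exactly one internal break, no junction breaks'' is impossible because it forces $v_0$ and $v_{h-1}$ to avoid both letters of a binary alphabet. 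Without some such argument your bound on the run across the $\varphi(a)\varphi(a)$ junction is unsupported.

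The odd-$k$ patch has a second gap of the same kind. You argue the junction is ``automatically a break'' because $h$ is odd and an alternating $u$ satisfies $u_0=u_{h-1}$ --- but nothing forces $u$ to be alternating, and if $u$ has an internal break the junction need not be one; then the run across it is $P+S$, which your argument caps only at $k-1$, yielding $(k-1)/2>(k-2)/2$. (This subcase is in fact harmless, since an internal break of $u$ forces $P,S\leq h-1$ and hence $P+S\leq 2h-2\leq k-2$, but your case distinction as written misses it.) Relatedly, since $R_2=\min\{R_{ab},R_{ba}\}$ is a minimum of two maxima taken over all of $S(L(G))$, for odd $k$ you cannot conclude from a single run; you need a uniform per-run bound of length $\leq k-2$, which the corrected case analysis does provide. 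Finally, your three-way split tacitly assumes $(ab)^R$ meets at most two images; the case of three or more should be dispatched explicitly (it makes some full $\varphi(c)$ alternating, which unwinds to forms (iv)--(v) of Lemma~\ref{LNoncircularMorfism}).
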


\begin{proof}

As explained in the proof of Lemma~\ref{Lprohibitedcycle}, we know that $\varphi(a) = (uv)^{i+j}u$, $\varphi(b) = (vu)^{i+j}v$ for some words  $u, v \in \A^+$, $|u| = |v|$, $u \neq v$ and integers $i \geq 1, j \geq 0 $. By Lemma \ref{LNoncircularMorfism}, one of the words $\varphi(a)$ or $\varphi(b)$ must contain $aa$ or $bb$ as a factor (otherwise the D0L-system $G$ is not circular). Thus, some of the factors $u$, $v$, $uv$, $vu$ contain $aa$ or $bb$. In order to find the maximum number of repetitions of $ab$ and $ba$ in $S(L(G)$, we can restrict ourselves to words $\varphi(a)\varphi(a)$, $\varphi(a)\varphi(b)$, $\varphi(b)\varphi(a)$ and $\varphi(b)\varphi(b)$. Obviously numbers $R_{ab}$ and $R_{ba}$ will be largest possible, if $u$ and $v$ are longest possible, i.e., $\varphi(a) = uvu$, $\varphi(b) = vuv$. Since $R_2 = \min \{R_{ab}, R_{ba}\}$, we get $R_2 \leq \frac{k-2}{2}$.   
\end{proof}

As explained at the beginning of this subsection the minimum synchronizing delay $Z_\text{min}$ is equal to or less than the number  $\max\{R_1k + 4k - 4, 2kR_2 + 3k - 4\}$. If $k = 2$, then the maximum value of $R_1$ is $2$ (by Lemma~\ref{lem:loop2}) and $R_2 = 0$ (as a cycle cannot appear in $GO_G$), it follows that $Z_\text{min} \leq 2k + 4k - 4 = 8$. 

If $k$ is an odd prime number, then the maximum value of $R_1$  is $k - 1$. Since $R_2$ is less than or equal to $(k - 2)/2$ we have
$$
	Z_\text{min} \leq  \max\{R_1k + 4k - 4, 2kR_2 + 3k - 4\} =  \max\{k^2 + 3k - 4, k^2 + k - 4\} = k^2 + 3k - 4.
$$

Finally, if $k$ is not prime and not equal to $2$, the maximal value of $R_1$ equals 
$$
	\max\left\{k\left(\frac{k}{d}-1\right) + 1, k \right\} = k\left(\frac{k}{d}-1\right) + 1,
$$
where $d$ is the smallest divisor of $k$ greater than one. Since $R_2$ is still less than or equal to $(k - 2)/2$, we have that 
$$
	Z_{\text{min}} \leq R_1k + 4k - 4 = k^2\left(\frac{k}{d}-1\right) + 5k - 4.
$$
This concludes the proof of Theorem~\ref{thm:main_result}.

\section{Conclusion}

The upper bounds in Theorem~\ref{thm:main_result} can be slightly improved: for instance if $k$ is a prime number, we have proved that any word containing more than $k^2(\frac{k}{d} -1) + \frac{k(d-1)}{d} + 1$ letters has a synchronizing point. This bound is attained for the morphism $\varphi(a) = (ab^{\frac{k}{d}-1})^d$, $\varphi(b) = a^k$. However, the proofs of these improved bounds are very technical and difficult to follow, therefore we do not state them here. The proof techniques presented in this paper could be used for uniform morphisms over three or more letter alphabets. Unfortunately, the higher the number of letters is, the higher the number of subgraphs of the respective graph of overhangs to be considered. In order to obtain some reasonable bound on minimal synchronizing delay for a general D0L-system, some other proof techniques must be used.


\section{Acknowledgment}

We acknowledge financial support by the Czech Science Foundation, grant GA\v CR 13-35273P (the second author), and by
the Grant Agency of the Czech Technical University in Prague, grant SGS11/162/OHK4/3T/14 (the first author).





\bibliographystyle{elsarticle-num}
\bibliography{biblio}

\end{document}